\numberwithin{equation}{section}
\theoremstyle{definition}
\newtheorem{defn}[equation]{Definition}
\newtheorem{rmk}[equation]{Remark}
\theoremstyle{plain}
\newtheorem{thm}[equation]{Theorem}
\newtheorem{prop}[equation]{Proposition}
\newtheorem{lem}[equation]{Lemma}
\newtheorem{cor}[equation]{Corollary}
\newcommand{\A}{\ensuremath{\mathcal{A}}}
\newcommand{\B}{\ensuremath{\mathcal{B}}}
\renewcommand{\c}{\ensuremath{\mathcal{C}}}
\newcommand{\C}{\ensuremath{\mathbb{C}}}
\renewcommand{\d}{\mathrm d}
\newcommand{\FID}{$\boxplus$-infinitely divisible}
\renewcommand{\H}{\ensuremath{\mathcal{H}}}
\newcommand{\hphi}{\ensuremath{\widehat{\phi}}}
\renewcommand{\l}{\ensuremath{\mathcal{L}}}
\newcommand{\map}{\ensuremath{\longrightarrow}}
\newcommand{\m}{\ensuremath{\mathfrak{M}}}
\newcommand{\N}{\ensuremath{\mathbb{N}}}
\newcommand{\NCP}{non-commutative probability space}
\newcommand{\one}{\ensuremath{\mathbf{1}}}
\newcommand{\p}{\ensuremath{\mathcal{P}}}
\newcommand{\Proc}{\ensuremath{\mathfrak{P}}}
\newcommand{\Q}{\ensuremath{\mathcal{Q}}}
\newcommand{\R}{\ensuremath{\mathbb{R}}}
\newcommand{\tensor}{\ensuremath{\otimes}}
\newcommand{\ul}[1]{\underline{#1}}
\newcommand{\unit}{\ensuremath{\mathbf{1}}}
\newcommand{\wh}[1]{\widehat{#1}}
\newcommand{\abs}[1]{\ensuremath{\left|#1\right|}}
\renewcommand{\Im}{\mathfrak{Im}}
\newcommand{\inv}[1]{\ensuremath{\frac{1}{#1}}}
\newcommand{\ts}[1]{\textsc{#1}}
\newcommand{\VNA}{von Neumann algebra}
\DeclareMathOperator*{\Leb}{L}
\DeclareMathOperator*{\NC}{NC}
\DeclareMathOperator*{\spt}{supp}
\title{Functionals of the Free Brownian Bridge}
\date{ }
\author{Janosch Ortmann\\ Warwick Mathematics Institute}
\begin{document}
\maketitle

\setlength\headheight{15pt}

\fancyhead[R]{\nouppercase{\rightmark}}
\fancyhead[L]{}
\cfoot{\thepage}

\bibliographystyle{acm}

\bibstyle{plain}

\abstract{\noindent We discuss the distributions of three functionals of the free Brownian bridge: its $\Leb^2$-norm, the second component of its signature and its L\'evy area. All of these are freely infinitely divisible. We introduce two representations of the free Brownian bridge as series of free semicircular random variables are used, analogous to the Fourier representations of the classical Brownian bridge due to \ts{L\'evy} and \ts{Kac}.}


\section{Introduction}

In this note we discuss the distributions of three non-commutative random variables defined in terms of a free Brownian bridge.

In his paper \cite{Levy50}, \ts{L\'evy} introduces the following representation of the Brownian bridge. Let $\xi_n, \eta_n$ be independent standard Gaussian random variables then the process defined by
\begin{align}
	\label{Eq:IntroLevy}
	\beta_{2\pi}(t) &= \sum_{n=1}^\infty \frac{\cos(nt)-1}{n\sqrt{\pi}} \xi_n+ \sum_{n=1}^\infty \frac{\sin(nt)}{n\sqrt{\pi}} \eta_n
	\intertext{defines a Brownian bridge on $[0,2\pi]$. Another representation is given by \ts{Kac} \cite{Kac50}. Retaining the notation for the $\eta_n$ it is a consequence of Mercer's theorem that the Gaussian process defined by}
	\label{Eq:IntroKac}
	\beta_1(t) & = \sum_{n=1}^\infty \frac{\sin(nt)}{n\pi}\, \eta_n
\end{align}
has the covariance kernel of a Brownian bridge. The analogue of the Gaussian distribution and processes in non-commutative probability theory are the semicircle law and semicircular processes. It turns out that the crucial properties of the Gaussian distribution needed for the observations above are shared by the semicircular law. Therefore if we replace $\xi_n,\eta_n$ by free standard semicirculars then (\ref{Eq:IntroLevy}) and (\ref{Eq:IntroKac}) define free Brownian bridges on $[0,2\pi]$ and $[0,1]$ respectively. We will use this fact to prove various properties of the square norm, the second component of the signature and the L\'evy area of the free Brownian bridge.

The $\Leb^2$-norm of the classical Brownian bridge was first considered by \ts{Kac} who used his representation (\ref{Eq:IntroKac}) to compute its Fourier transform. Further calculations were performed using \ts{Kac}'s work, see \ts{Tolmatz} \cite{Tolmatz_Bridge} and the references therein. We will compute the R-transform of the free analogue of this object and use the fact that its law is freely infinitely divisible to prove that it has a smooth density for which we give an implicit equation.

In \cite{CDM_Levy} \ts{Capitaine} and \ts{Donati-Martin} construct the second component $Z$ of the signature of the free Brownian motion. This process plays a role in the theory of rough paths, see \cite{CDM_Levy}, \ts{Lyons}\cite{Lyons98} and \ts{Victoir}\cite{Victoir04} for details. The second component of the signature is a process taking values in the tensor product of the underlying non-commutative probability space with itself. Equipped with the product expectation this is a probability space in its own right and we compute the R-transform of $Z$.  A connection between the cumulants of $Z$ and the number of \textit{2-irreducible meanders}, a combinatorial object introduced by \ts{Lando--Zwonkin}\cite{LandoZwonkin93} and further analysed by \ts{Di Francesco--Golinelli--Guitter} \cite{DGG_Meander} is pointed out.

Finally we apply the L\' evy-type representation to compute the R-transform of the L\' evy area corresponding to the free Brownian bridge. This random variable is also freely infinitely divisible. Once again this allows us to deduce that the law in question has a smooth density. Again we obtain an implicit equation. 

From the considerations involving free infinite divisibility it also follows that the support of the law of both L\'evy area and square norm is a single interval, in the former case symmetric about the origin, in the latter strictly contained in the positive half-line. In \cite{LDP_NCP} a large deviations principle is established for the blocks of a uniformly random non-crossing partition. This result allows us to determine the maximum of the support from the free cumulants. We obtain implicit equations that determine the essential suprema of L\'evy area and square norm.

\paragraph{Acknowledgements.} The author would like to thank his PhD advisor, Neil O'Connell for his advice and support in the preparation of this paper. We also thank Philippe Biane for helpful discussions and suggestions.


\section{Free Probability Theory}
\label{Sec:FPT}

We recall here some definitions and properties from free probability theory. For an introduction to the subject see for example \cite{VDN, VoicSF, HiaiPetz}.

\subsection{Freeness, Distributions, and Transforms}

Throughout let $(\A,\phi)$ be a \textit{non-commutative probability space}, i.e.  a unital \VNA\ equipped with a state $\phi$ on \A. We think of elements $a\in\A$ as non-commutative random variables and consider $\phi(a)$ to be the expectation of $a\in\A$. We will only consider self-adjoint $a\in\A$. Then there exists a compactly supported measure $\mu_a$ on \R, called the \textit{distribution} of $a$, such that
\begin{align*}
	\phi(a^n) &= \int t^n\,\mu_a(\d t)\quad\quad\forall\, n\in\N.
	\intertext{Recall that the \textit{Cauchy transform} of $\mu_a$ is defined to be}
	G_{\mu_a} (z)  = \int_\R\frac{\mu_a(\d t)}{t-z} &= \sum_{n=0}^\infty \phi(a^n) z^{-n-1}.
\end{align*}
\noindent Since $\mu_a$ is compactly supported the first equality defines an analytic map $G_{\mu_a}\colon\C^+\map\C^-$. The power series expansion is valid on a neighbourhood $U_a$ of infinity. We will also write $G_a$ for $G_{\mu_a}$. 

\begin{defn}
	Von Neumann subalgebras $\B_1,\ldots,\B_N$ of $\A$ are said to be \textit{free} if for every set of indices $\{r_j\}_{j=1}^m\subseteq\{1,\ldots,N\}$ and collection $\{a_j\in\B_{r_j}\colon 1\leq j\leq m\}$ such that $r_j\ne r_{j+1}$ and $\phi(a_j)=0\ \forall\, j\ $ we already have
	\begin{align*}
			\phi(a_1,\ldots,a_m)=0.
	\end{align*}
Random variables $a_1,\ldots,a_N$ are said to be free if the unital von Neumann algebras generated by the $a_j$ are free. 
\end{defn}

\par\noindent If $a$ and $b$ are free then the distribution of $a+b$ is uniquely determined by those of $a$ and $b$ (see Remark~\ref{Rmk:PropRTf}(2) below). Denote the laws of $a,b$ by $\mu_1,\mu_2$ respectively. Then the \textit{free convolution} of $\mu_1$ and $\mu_2$ is defined to be the distribution of $a+b$. Because self-adjoint elements of \A\ are determined by their distribution this induces a binary operation on the space of compactly supported probability measures, denoted $\boxplus$.\\

A partition $\pi$ of the set $\ul{n}=\{1,\ldots,n\}$ is said to be \emph{crossing} if there exist distinct blocks $V_1$, $V_2$ of $\pi$ and $x_j,y_j\in V_j$ such that $x_1<x_2<y_1<y_2$. Otherwise $\pi$ is said to be \emph{non-crossing}. Equivalently, arrange the numbers $1,\ldots,n$ clockwise on a circle and connect any two elements of the same block of $\pi$ by a straight line. Then $\pi$ is non-crossing if and only if the lines drawn are pairwise disjoint. Let $\NC(n)$ denote the set of non-crossing partitions on $\ul{n}$.

\begin{figure}[H]
	\begin{center}
	\includegraphics[scale = .2]{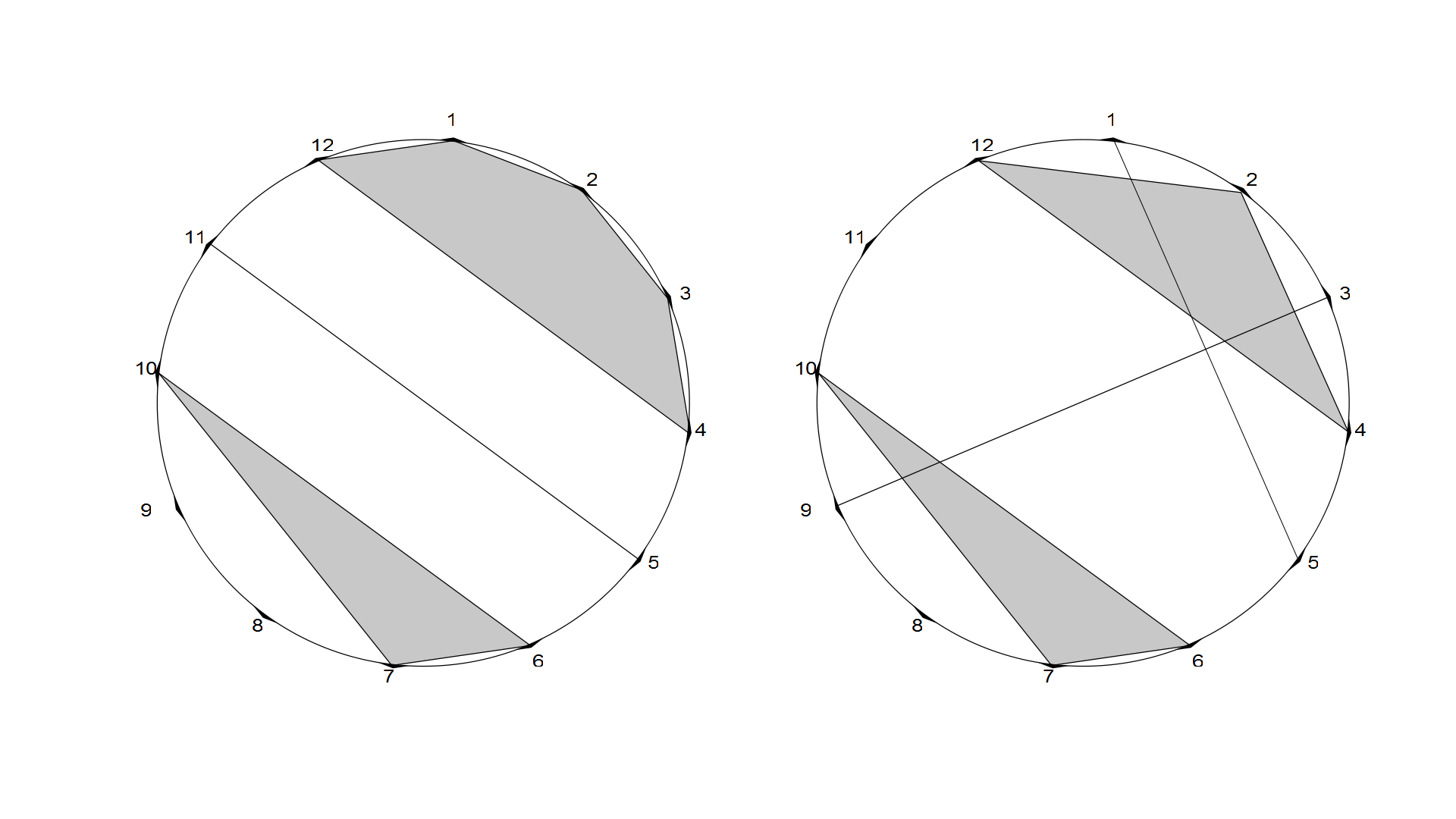}
	\hspace{5cm}
	\parbox{11cm}{\caption{\footnotesize{The partition $\{\{8\}, \{9\}, \{10,7,6\}, \{11,5\}, \{12,4,3,2,1\} \}$ is non-crossing, $\{\{5,1\}, \{8\}, \{9,3\},\{10,7,6\}, \{12,4,2\} \}$ is crossing.}}}
\end{center}

\end{figure}

\begin{defn}
    \label{Def:NCC}
	The \textit{free cumulants} of \A\ are defined to be the maps $k_n\colon\A^n\map\C$ ($n\in\N$) defined indirectly by the following system of equations:
    \begin{align}
			\label{Eq:DefCum}
        \phi(a_1,\ldots,a_n) &=  \sum_{\pi\in\text{NC}(n)} k_\pi[a_1,\ldots,a_n]
        \intertext{where $k_\pi$ denotes the product of cumulants according to the block structure of $\pi$. That is, if $V_1,\ldots,V_r$ are the components of $\pi\in\NC(n)$ then}
			\notag
        k_\pi[a_1,\ldots,a_n] & = k_{V_1}[a_1,\ldots,a_n]\ldots k_{V_n}[a_1,\ldots,a_n]
    \end{align}
    where, for $V=(v_1,\ldots,v_r)$ we just have $k_V[a_1,\ldots,a_n]=k_{\abs{V}}[a_{v_1},\ldots,a_{v_r}]$. 
\end{defn}

\par\noindent Note that (\ref{Eq:IntroKac}) has the form $\phi(a_1,\ldots,a_n)=k_n[a_1,\ldots,a_n]+$ lower order terms, so that we can find the $k_n$ inductively. Alternatively, (\ref{Eq:DefCum}) defines the $k_n$ by M\"obius inversion. See \cite{NicaSpeicher} for details.

\par We will write $k_n(a)$ for $k_n[a,\ldots,a]$. The \textit{R-transform} of a random variable $a\in\A$ is defined to be the formal power series
	\begin{align}
		\label{Eq:DefRT}
		R_a(z) & = \sum_{n=0}^\infty k_{n+1}(a) z^n.
		\intertext{If the law of $a$ has compact support then equation (\ref{Eq:DefRT}) defines an analytic function on a neighbourhood of zero \cite[Theorem 3.2.1]{HiaiPetz}. Moreover the Cauchy transform $G_a$ of $a$ is locally invertible on a neighbourhood of infinity and the inverse $K_a$ satisfies}
	\notag
	K_a(z) & = R_a(z) + \inv{z}.
\end{align}

\begin{rmk} \label{Rmk:PropRTf} The following three properties of the R-transform are easy to check using the continuity of $\phi$ and multilinearity of the cumulants.
\begin{enumerate}
\item \label{En:RCont} If $a_n\longrightarrow a\in\A$ then $R_{a_n}(z) \longrightarrow R_a(z)$ as $n\to \infty$
\item \label{En:Radd} If $a,b\in\A$ are free then $R_{a+b}(z) =R_a(z)+R_b(z)$
\item \label{En:RScale} For $\lambda\in\C$ we have $R_{\lambda a}(z) = \lambda R_a(\lambda z)$.
\end{enumerate}
\end{rmk}

\subsection{Semicircular Processes}

\label{Sec:SemiProc}

\begin{defn}
	A collection $\mathcal{S}=(s_j)_{j\in I}$ of non-commutative variables on \A\ is said to be a \textit{semicular family} with \textit{covariance} $\left(c(i,j)\right)_{i,j\in I}$ if the cumulants are given by
	\begin{align*}
		k_\pi[s_{j_1},\ldots s_{j_n}] & = \prod_{p\sim_\pi q} c(j_p,j_q).
	\end{align*}
\end{defn}

\noindent If $\mathcal{S}$ consists of a singleton $s_1$ and $r=2\sqrt{c(1,1)}$ then the distribution of $s_1$ is the \textit{centred semicircle law of radius} $r$, that is the measure $\sigma_r$ on \R\ given by

\begin{align*}
	\sigma_r(\d t)& = \frac{2}{\pi r^2}\sqrt{r^2-t^2}\ \mathbf{1}_{[-r,r]}(t)\, \d t.
\end{align*}

\noindent In particular $\sigma_2$ is also called the \textit{standard semicircle law} and non-commutative random variables with law $\sigma_r$ ($\sigma_2$) are referred to as \textit{(standard) semicirculars}. 
\par The semicircle law plays a similar role to the Gaussian distribution on classical probability theory. In particular there exists a central limit theorem \cite[Theorem 3.5.1]{VDN}, and a collection of random variables with a joint semicircular law is determined by its covariance. To be more precise we recall the following

\begin{prop}[\ts{Nica--Speicher}\cite{NicaSpeicher}, Proposition 8.19]
	Let $(s_i)_{i\in I}$ be a semicircular family of covariance $(c(i,j))_{i,j\in I}$ and suppose $I$ is partitioned by $I_1,\ldots, I_d$. Then the following are equivalent:
\begin{enumerate}
\item The collections $\{s_j\colon j\in I_1\},\ldots,\{s_j\colon j\in I_d\}$ are free
\item We have $c(r,j)=0$ whenever $r\in I_p$ and $j\in I_q$ with $p\ne q$.
\end{enumerate}
\end{prop}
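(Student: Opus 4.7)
The plan is to invoke Speicher's characterisation of freeness via vanishing of mixed free cumulants: subalgebras $\B_1, \ldots, \B_d$ are free if and only if $k_n[a_1, \ldots, a_n] = 0$ whenever $a_i \in \B_{r_i}$ and $(r_1, \ldots, r_n)$ is not a constant sequence. Applying this to $\B_p$, the von Neumann algebra generated by $\{s_j : j \in I_p\}$, reduces the proposition to a question about cumulants of the generators, where the semicircular formula in the preceding definition gives complete control.

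The direction $(1) \Rightarrow (2)$ is immediate: freeness of the $\B_p$ forces the mixed cumulant $k_2[s_r, s_j]$ with $r \in I_p$, $j \in I_q$, $p \neq q$ to vanish, and the defining semicircular formula evaluates this cumulant to $c(r,j)$.

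For $(2) \Rightarrow (1)$, my approach would be as follows. By multilinearity of the cumulants together with weak continuity of the state $\phi$ (which propagates vanishing from polynomials in the $s_j$ to all of the $\B_p$), it suffices to verify vanishing of mixed cumulants on monomials in the generators. For monomials $m_i = s_{j^i_1} \cdots s_{j^i_{n_i}}$ with $j^i_k \in I_{r_i}$, the Krawczyk--Speicher formula for the cumulant of a product expresses $k_n[m_1, \ldots, m_n]$ as a sum, over non-crossing partitions $\pi$ of $\ul N$ (where $N = n_1 + \cdots + n_n$) whose join with the interval partition separating the monomials is the full partition of $\ul N$, of $k_\pi$ evaluated on the concatenated list of generators. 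The semicircular formula restricts this sum to non-crossing pair partitions, and under $(2)$ each contributing pair must have both legs in a common $I_p$. Such a $\pi$ can only connect monomials sharing a common label, so the join condition would force all the $r_i$ to agree, contradicting the mixed-cumulant hypothesis; the sum is therefore empty.

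The main obstacle is the combinatorial analysis in the $(2) \Rightarrow (1)$ step just sketched, which hinges on the fact that the semicircular cumulant formula only allows pair partitions to contribute and therefore tightly couples the block structure of admissible $\pi$ to the decomposition of $I$ into $I_1, \ldots, I_d$. The reduction from arbitrary elements of the $\B_p$ to polynomials in the generators is routine once the generator-level statement is in hand.
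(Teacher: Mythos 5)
Your argument is correct: the deduction of $(2)\Rightarrow(1)$ from the Krawczyk--Speicher product formula, the restriction to pair partitions forced by the semicircular cumulant structure, and the observation that the join condition $\pi\vee\sigma=1_N$ then forces all monomial labels to coincide is exactly the standard proof, and the converse via $k_2[s_r,s_j]=c(r,j)$ is likewise fine. The paper gives no proof of its own here --- it simply cites Nica--Speicher, Proposition 8.19 --- and your reconstruction matches the argument in that source (their vanishing-of-mixed-cumulants characterisation of freeness together with the cumulants-of-products formula).
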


\noindent In particular $\{s_j\colon j\in I\}$ is a free family if and only if $C=(c(r,j))_{r,j\in I}$ is diagonal.

\begin{defn}
	A process $(X(t))_{t\geq 0}$ on $\A$ is said to be a \textit{semicircular process} if for every $t_1,\ldots, t_n\in [0,\infty)$, the set $\left(X(t_1),\dots,X(t_n)\right)$ is a semircular family.
\end{defn}

\par\noindent By the considerations above the finite-dimensional distributions of a semicircular process are determined by the \textit{covariance structure} of the process, i.e. by the function $C\colon [0,\infty)^2\map\C$ defined by
\begin{align*}
	C(s,t) & = \phi(X(s) X(t)).
\end{align*}.

\subsection{The L\' evy Representation of the Free Brownian Bridge}

\label{Sec:FBM}

\begin{defn}
	A centred semicircular process $\left(\beta_T(t)\right)_{t\in [0,T]}$ on $\A$ is said to be a \textit{free Brownian bridge} on $[0,T]$ if its covariance structure is given by
	\begin{align*}
		\phi(\beta_T(s)\beta_T(t)) = s\wedge t - \frac{st}{T}.
	\end{align*}
\end{defn}

\begin{rmk}
	\label{Rmk:BridgeMotion}
	In analogy with classical probability it can be easily checked that if $\beta$ is a free Brownian bridge on $[0,1]$ and $\xi_0$ is a free standard semicircular free from $\{\beta(t)\colon t\in [0,1]\}$, then $X(t)=\xi_0 t+\beta(t)$ defines a \textit{free Brownian motion}, that is

\begin{enumerate}[(i)]
\item the distribution of $X(t)$ is a centred semicircular law with radius $t$;
\item $X(t)-X(s)$ is free from $\{X(r)\colon r\leq s\}$
\item $X(t)-X(s)$ has the same distribution as $X(t-s)$.
\end{enumerate}

\end{rmk}

\noindent The following proposition is the analogue of \ts{L\' evy}'s representation of the classical Brownian bridge \cite{Levy50}. Its proof follows from the fact that centred semicircular processes are determined by their covariance and that (non-commutative) covariances of the $\xi_n, \eta_n$ are the same as the (commutative) covariances of a corresponding independent family of standard Gaussian variables.

\begin{prop}
	\label{Thm:Levy}
	Let $\{\xi_n,\eta_m\colon (n,m)\in\N_0\times\N\}$ be a set of free standard semicircular variables in \A. Then the process $\beta_{2\pi}$ defined by
\begin{align}
	\label{Eq:Levy}
	\beta_{2\pi}(t)	& = \sum_{n=1}^\infty \frac{\cos(nt)-1}{n\sqrt{\pi}} \xi_n+ \sum_{n=1}^\infty \frac{\sin(nt)}{n\sqrt{\pi}} \eta_n
\end{align}
\noindent is a free Brownian bridge on $[0,2\pi]$.
\end{prop}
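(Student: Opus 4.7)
The plan is to exploit the fact, noted just before the proposition, that a centred semicircular process is determined by its covariance structure. Accordingly, I would verify two things separately: that the series \eqref{Eq:Levy} defines a centred semicircular process on $[0,2\pi]$, and that its covariance kernel equals $s\wedge t - st/(2\pi)$.

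For the first point, let $\beta_{2\pi}^{(N)}(t)$ denote the $N$-th partial sum of \eqref{Eq:Levy}. Freeness and centredness of the $\xi_n,\eta_m$ yield the orthogonality relations $\phi(\xi_n\xi_m)=\phi(\eta_n\eta_m)=\delta_{nm}$ and $\phi(\xi_n\eta_m)=0$, whence, uniformly in $t\in[0,2\pi]$,
\begin{align*}
	\phi\bigl((\beta_{2\pi}^{(M)}(t)-\beta_{2\pi}^{(N)}(t))^2\bigr) \leq \frac{5}{\pi}\sum_{n=N+1}^{M}\frac{1}{n^2}.
\end{align*}
Hence the series converges in $\Leb^2(\phi)$; since $\norm{\sum a_n \xi_n} = 2(\sum a_n^2)^{1/2}$ for a free standard semicircular family, the convergence is in fact in operator norm. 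Any $\R$-linear combination of a free semicircular family is semicircular, and a tuple of such combinations is a joint semicircular family; hence $\beta_{2\pi}^{(N)}$ is, for each $N$, a centred semicircular process. The joint semicircular property, being equivalent to the vanishing of all mixed free cumulants of orders $\neq 2$, is stable under the operator-norm limit (cumulants are polynomials in moments, and moments are continuous in the operator norm, cf. Remark~\ref{Rmk:PropRTf}(\ref{En:RCont}) applied jointly), so $\beta_{2\pi}$ is itself a centred semicircular process.

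For the covariance, the orthogonality relations give
\begin{align*}
	\phi\bigl(\beta_{2\pi}(s)\beta_{2\pi}(t)\bigr) &= \frac{1}{\pi}\sum_{n=1}^{\infty}\frac{(\cos(ns)-1)(\cos(nt)-1)+\sin(ns)\sin(nt)}{n^2}\\
	&= \frac{1}{\pi}\sum_{n=1}^{\infty}\frac{\cos(n(s-t))-\cos(ns)-\cos(nt)+1}{n^2}.
\end{align*}
Applying the classical Fourier identity $\sum_{n\geq 1}\cos(nx)/n^2 = \pi^2/6 - \pi|x|/2 + x^2/4$, valid for $|x|\leq 2\pi$, together with $\sum 1/n^2 = \pi^2/6$, the right-hand side collapses after cancellation to $\tfrac12(s+t-|s-t|) - \tfrac{st}{2\pi} = s\wedge t - \tfrac{st}{2\pi}$, which is exactly the covariance of the free Brownian bridge on $[0,2\pi]$.

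The only non-routine step is the preservation of joint semicircularity in the limit: one must check that the vanishing of all mixed free cumulants of orders $\neq 2$ survives the relevant convergence. Given the norm-boundedness of the partial sums this is essentially automatic, but it is the one place where the argument cannot be entirely bypassed by the classical Fourier calculation.
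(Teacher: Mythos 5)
Your argument is correct and follows the same route the paper indicates: centred semicircular processes are determined by their covariance, and the covariance of the series agrees with the classical Gaussian computation, here carried out via the Fourier identity $\sum_{n\geq 1}\cos(nx)/n^2=\pi^2/6-\pi|x|/2+x^2/4$ to give $s\wedge t - st/(2\pi)$. The paper only sketches this; your additional care with $\Leb^2(\phi)$/operator-norm convergence of the partial sums and the persistence of joint semicircularity in the limit is a correct filling-in of details the paper leaves implicit.
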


\subsection{A Representation for Centred Semicircular Processes}

\label{Sec:SeriesRepn}

In this section we show how \ts{Kac}'s representation\cite{Kac50} for the classical Brownian bridge on the unit interval can be translated into the setting of free probability. His method extends to all centred semicircular (or indeed Gaussian) processes, as follows. Everything relies on the following classical result from functional analysis, see \ts{Bollobas} \cite{Bollobas}.

\begin{thm}[Mercer's theorem]
	Let $K\colon [a,b]\times [a,b]\map \R$ be a non-negative definite symmetric kernel. Denote by $\H$ the Hilbert space $\Leb^2[a,b]$ and let $T_K$ be the operator on \H\ associated to $K$, that is,
	\begin{align}
		T_K (f)(s) & = \int_a^b K(s,t)\,f(t)\ \d t.
		\intertext{Then there exists an orthonormal basis $(f_n)_{n\in\N}$ of \H\ consisting of eigenfunctions of $T_K$ such that the corresponding eigenvalues $\lambda_n$ are non-negative, $f_n\in\c[a,b]$ whenever $\lambda_n\ne 0$ and}
		\label{Eq:Kernelsum}
		K(s,t) & = \sum_{n=1}^\infty \lambda_n f_n(s) f_n(t)
	\end{align}
	where the convergence is absolute and uniform, and hence also in $\Leb^2[a,b]$.
\end{thm}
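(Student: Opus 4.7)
The plan is to apply the spectral theorem for compact self-adjoint operators to $T_K$ and then promote the resulting $\Leb^2$ decomposition of $K$ into the absolutely and uniformly convergent expansion~\eqref{Eq:Kernelsum}. Since $K$ is continuous (hence bounded) and symmetric on the compact square $[a,b]^2$, the operator $T_K$ is compact and self-adjoint on $\H=\Leb^2[a,b]$; the hypothesis that $K$ is non-negative definite translates precisely to $\langle T_K g,g\rangle\geq 0$ for all $g\in\H$, making $T_K$ a positive operator. The spectral theorem therefore produces an orthonormal basis $(f_n)$ of eigenfunctions with eigenvalues $\lambda_n\geq 0$. When $\lambda_n\neq 0$, writing $f_n=\lambda_n^{-1}T_K f_n$ and using uniform continuity of $K$ on $[a,b]^2$ shows that $T_K$ sends $\H$ into $\c[a,b]$, so $f_n\in\c[a,b]$.

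Next I would control the partial sums via truncated remainder kernels. Define
\begin{align*}
K_N(s,t) & = K(s,t)-\sum_{n=1}^N\lambda_n f_n(s)f_n(t).
\end{align*}
A direct computation using orthonormality of the $f_n$ gives $\langle T_{K_N}g,g\rangle=\sum_{n>N}\lambda_n\langle g,f_n\rangle^2\geq 0$, so $K_N$ is again non-negative definite; continuity and symmetry are inherited from $K$ and the $f_n$. Non-negative definiteness together with continuity forces $K_N(s,s)\geq 0$ pointwise: if $K_N(s_0,s_0)<0$ then $K_N$ would be strictly negative on a neighbourhood of $(s_0,s_0)$, contradicting $\langle T_{K_N}g,g\rangle\geq 0$ for $g$ concentrated near $s_0$. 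This yields the key bound
\begin{align*}
\phi_N(s) & :=\sum_{n=1}^N\lambda_n f_n(s)^2\leq K(s,s),
\end{align*}
so the non-negative series $\phi(s):=\lim_N\phi_N(s)$ converges pointwise and is dominated by the continuous function $K(s,s)$.

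The step I expect to be the main obstacle is upgrading this inequality to the pointwise equality $\phi(s)=K(s,s)$ on all of $[a,b]$. The route I would take combines two ingredients. First, using the pointwise bound together with $\int_a^b f_n^2=1$ and monotone convergence gives $\sum_n\lambda_n\leq\int_a^b K(s,s)\,\d s<\infty$ and $\int_a^b\phi=\sum_n\lambda_n$; second, the trace identity $\sum_n\lambda_n=\int_a^b K(s,s)\,\d s$ for continuous non-negative definite kernels (a classical consequence of the Hilbert--Schmidt structure of $T_K$, together with the Cauchy--Schwarz/Parseval argument applied to $g\mapsto\langle T_K g,g\rangle$) yields $\int_a^b[K(s,s)-\phi(s)]\,\d s=0$. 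Since the integrand is non-negative and $\phi$ is lower semicontinuous as an increasing limit of continuous functions, testing against approximate identities concentrated at an arbitrary $s_0$ and combining Fatou's lemma (applied to $\sum_n\lambda_n\langle g_k,f_n\rangle^2$) with the continuity of $K$ at $(s_0,s_0)$ pins down $\phi(s_0)=K(s_0,s_0)$ pointwise.

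Once this pointwise identity is in hand, the continuous monotone partial sums $\phi_N$ converge pointwise to the continuous limit $K(s,s)$, so Dini's theorem gives uniform convergence on $[a,b]$. The extension to the full square is then a clean Cauchy--Schwarz estimate
\begin{align*}
\left|\sum_{n=M}^N\lambda_n f_n(s)f_n(t)\right|^2 & \leq \left(\sum_{n=M}^N\lambda_n f_n(s)^2\right)\!\left(\sum_{n=M}^N\lambda_n f_n(t)^2\right),
\end{align*}
which delivers absolute and uniform convergence of $\sum_n\lambda_n f_n(s)f_n(t)$ on $[a,b]^2$. The identity~\eqref{Eq:Kernelsum} then follows because both sides are continuous on $[a,b]^2$ and they agree in $\Leb^2([a,b]^2)$, the latter being immediate from the spectral decomposition of the Hilbert--Schmidt operator $T_K$.
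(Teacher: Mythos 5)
The paper does not actually prove this statement: Mercer's theorem is quoted as a classical result with a citation to Bollob\'as, so there is no in-paper argument to compare yours against. Judged on its own terms, your outline has the standard architecture --- spectral theorem for the compact positive operator $T_K$, continuity of eigenfunctions via $f_n=\lambda_n^{-1}T_Kf_n$, positivity of the truncated remainder kernel $K_N$, the diagonal bound $\sum_{n\leq N}\lambda_nf_n(s)^2\leq K(s,s)$, then Dini and Cauchy--Schwarz --- and all of those pieces are correct (granting, as one must, that $K$ is continuous).

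The step you yourself flag as the obstacle is where the argument genuinely breaks. First, the trace identity $\sum_n\lambda_n=\int_a^bK(s,s)\,\d s$ is not a consequence of the Hilbert--Schmidt structure of $T_K$: Hilbert--Schmidt only gives $\sum_n\lambda_n^2=\iint K^2<\infty$. The identity is true, but its usual proofs either pass through Mercer's theorem itself or require a separate argument (e.g.\ $\int_a^bK(s,s)\,\d s=\lim\sum_i\langle T_Kg_i,g_i\rangle$ over normalized indicators $g_i$ of a fine partition, bounded by $\sum_n\lambda_n$ via Bessel); as written the step is circular or at best unproved. Second, even granting $\int_a^b\bigl(K(s,s)-\phi(s)\bigr)\,\d s=0$, a non-negative upper semicontinuous function with zero integral need not vanish everywhere (it may be positive at a single point), and your approximate-identity/Fatou patch only yields $K(s_0,s_0)=\lim_k\langle T_Kg_k,g_k\rangle\geq\liminf_k\sum_n\lambda_n\langle g_k,f_n\rangle^2\geq\phi(s_0)$ --- the inequality you already have, not the equality you need.

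The standard repair bypasses the trace identity entirely: fix $s$ and note that, by your Cauchy--Schwarz estimate together with the diagonal bound, the series $\sum_n\lambda_nf_n(s)f_n(t)$ converges absolutely and uniformly in $t$ to a function $h(s,\cdot)$ continuous in $t$; both $h(s,\cdot)$ and $K(s,\cdot)$ have Fourier coefficients $\lambda_nf_n(s)$ with respect to the complete orthonormal basis $(f_n)$, so they agree in $\Leb^2[a,b]$ and hence, both being continuous, for every $t$. Taking $t=s$ gives the pointwise diagonal identity $\phi(s)=K(s,s)$ directly, after which your Dini and Cauchy--Schwarz steps deliver absolute and uniform convergence on the square exactly as you describe.
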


\noindent We can use Mercer's theorem to represent any centred semicircular process as a series of free standard semicircular random variables, noting that if $Y$ is a centred semicircular process on $[a,b]$ then its covariance function $K$ defined  by $K(s,t)=\phi(Y(s)Y(t))$ is a non-negative symmetric kernel on $[a,b]$.

\begin{cor}
	\label{Thm:KacRep}
	Let $K, \H, (\lambda_n,f_n)_{n\in\N}$ be as in Mercer's theorem and let $(\eta_n)_{n\in\N}$ be a sequence of free standard semicirculars. Then the process $Y$ defined by
	\begin{align}
		\label{Eq:SCrep}
		Y(t) &= \sum_{n=1}^\infty \sqrt{\lambda_n} f_n(t)\, \eta_n
	\end{align}
	\noindent is a centred semicircular process of covariance $K$.
	\begin{proof}
		It is immediate that $Y$ is a centred semircircular process. Its covariance kernel is given by
		\begin{align*}
			\phi(Y(s)Y(t)) & = \sum_{m,n=1}^\infty \sqrt{\lambda_m\lambda_n} f_m(s) f_n(t) \phi(\eta_m\eta_n)\\
				& = \sum_{n=1}^\infty \lambda_n f_n(s) f_n(t) = K(s,t)
		\end{align*}
		\noindent by Mercer's theorem.
	\end{proof}
\end{cor}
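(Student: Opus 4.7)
The plan is to work with partial sums and pass to the limit, using the fact that semicircular families are determined by their covariance and that convergence is well-behaved under the state $\phi$. Set $Y_N(t) = \sum_{n=1}^N \sqrt{\lambda_n} f_n(t)\,\eta_n$. Because each $Y_N(t)$ is a finite real-linear combination of elements of the free semicircular family $(\eta_n)$, the joint collection $(Y_N(t))_{t\in [a,b]}$ is itself a centred semicircular family (linear combinations of a semicircular family remain semicircular, since the cumulant formula in the definition of a semicircular family is invariant under linear change of variables). Its covariance is computed directly: using $\phi(\eta_m\eta_n) = \delta_{mn}$,
\begin{align*}
\phi\bigl(Y_N(s) Y_N(t)\bigr) = \sum_{m,n=1}^N \sqrt{\lambda_m\lambda_n}\, f_m(s) f_n(t)\, \phi(\eta_m\eta_n) = \sum_{n=1}^N \lambda_n f_n(s) f_n(t),
\end{align*}
which by Mercer's theorem converges to $K(s,t)$ as $N\to\infty$.

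Next I would establish that the series defining $Y(t)$ converges in $L^2(\A,\phi)$. By orthogonality of the $\eta_n$, for $M<N$ one has $\phi\bigl((Y_N(t) - Y_M(t))^2\bigr) = \sum_{n=M+1}^{N} \lambda_n f_n(t)^2$, and the tail of this sum vanishes because $\sum_n \lambda_n f_n(t)^2 = K(t,t) < \infty$ by Mercer's theorem (indeed the convergence is uniform in $t$). Since $\A$ is a von Neumann algebra and $\phi$ is a faithful normal state on the closure of $\{Y_N(t)\}_N$, this Cauchy property produces a limit $Y(t)\in\A$, and $Y_N(t)\to Y(t)$ in $\|\cdot\|_2$.

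Finally I would transfer the semicircular property through the limit. Because convergence in $\|\cdot\|_2$ gives convergence of all mixed moments $\phi(Y_{N}(t_{i_1})\cdots Y_{N}(t_{i_r}))\to \phi(Y(t_{i_1})\cdots Y(t_{i_r}))$ (using that the $Y_N(t)$ are bounded uniformly in $N$ for each fixed $t$, via Mercer's uniform convergence applied to $K(t,t)$ and the explicit form of even moments of semicirculars), the free cumulants of $(Y(t_1),\ldots,Y(t_n))$ are obtained as limits of those of $(Y_N(t_1),\ldots,Y_N(t_n))$. The latter are supported on pair partitions with weights $\phi(Y_N(s)Y_N(t))$, so in the limit they remain supported on pair partitions with weights $K(t_i,t_j)$; that is exactly the semicircular condition with covariance $K$. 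The covariance of $Y$ itself is $\lim_N \phi(Y_N(s)Y_N(t)) = K(s,t)$ as computed above. The main subtlety — and the only place where something could go wrong — is ensuring that the $L^2$-limit genuinely lands in $\A$ and that moments pass to the limit; this is why it is important that $(\A,\phi)$ be a von Neumann probability space with faithful $\phi$, and that Mercer's expansion converges uniformly so the relevant sums of $\lambda_n f_n(t)^2$ are finite.
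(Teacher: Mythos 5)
Your argument is correct and is essentially the paper's proof: the paper simply asserts that $Y$ is ``immediately'' a centred semicircular process and then performs the same covariance computation, while you supply the convergence details (uniform operator-norm bounds on the partial sums via $\|Y_N(t)\|=2\bigl(\sum_{n\le N}\lambda_n f_n(t)^2\bigr)^{1/2}\le 2\sqrt{K(t,t)}$, $L^2$-convergence into $\A$, and passage of moments and hence cumulants to the limit) that justify that assertion. No gap; your version is just a fully spelled-out form of the same approach.
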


\noindent For the free Brownian bridge on $[0,1]$ we have $K(s,t)=s\wedge t-st$. Solving the corresponding eigenvalue-eigenvector equation we obtain \ts{Kac}'s representation in the free setting:

\begin{align}
	\label{Eq:KacRep}
	\beta_1(t) & = \sum_{n=1}^\infty \frac{\sin(nt)}{n \pi}	\, \eta_n.
\end{align}


\ \\ 
\section{Square Norm of the Free Brownian Bridge}
\label{Sec:SqNorm}

In this section we consider the square-norm of a free Brownian bridge $\beta$ on  interval. Recall that $\A$ is a \VNA\ so that we can consider $\beta$ as a map from $[0,1]$ into a Banach space which is easily seen to be continuous. We can therefore use Riemann integration to define

\begin{align*}
	\Gamma & = \int_0^1 \beta(t)^2\, \d t
\intertext{where $\beta$ is a free Brownian bridge on $[0,1]$.  In this section we discuss the distribution of the non-commutative random variable $\Gamma$, using the representation (\ref{Eq:KacRep}). \ts{Kac}\cite{Kac50} showed that the Laplace transform of the commutative analogue of $\Gamma$ is given by}
	\wh{f}(p)&= \left(\frac{\sqrt{2p}}{\sinh{\sqrt{2p}}}\right)^{(1/2)}.
\end{align*}
\par\noindent Other properties, in particular the density function $f$, were computed, most recently by \ts{Tolmatz}\cite{Tolmatz_Bridge}.
\par We give here the R-transform of $\Gamma$ and an expression for its moments involving a sum over non-crossing partitions. Further below we show that the distribution $\mu_\Gamma$ of $\Gamma$ is freely infinitely divisible. This gives us some analytic tools to show that there exist $a,b\in\R$ with $0<a<b<1$ such that the support of $\mu_\Gamma$ is $[a,b]$ and that $\mu_\Gamma$ has a smooth positive density on $[a,b]$. We give an implicit equation and a sketch for the density.

Finally we use a result from \cite{LDP_NCP} to characterise the maximum $b$ of the support of $\mu_\Gamma$.

\subsection{The R-transform} 

\begin{prop}
	\label{Thm:Rtf}
	The R-transform of $\Gamma$ is given by
	\begin{align}
		\label{Eq:RGamma}
		R_\Gamma(z) & = \frac{1-\sqrt{z}\cot(\sqrt{z})}{2 z}.
	\end{align}
	\begin{proof}
	By orthonormality of the functions $\sin(nt)$ we have
	\begin{align}
	\label{Eq:freesqnorm}
		\Gamma &= \inv{\pi^2}\,\sum_{n=1}^\infty \inv{n^2} \eta_n^2.
\end{align}
	\noindent The square of a standard semicircular random variable is a free Poisson element of unit rate and jump size (\ts{Nica--Speicher}\cite{NicaSpeicher}, Proposition 12.13).  So the Cauchy transform of $\eta_n$ is given by \cite{NicaSpeicher}
\begin{align*}
	G_n(z) = \sum_{n=0}^\infty c_m z^{-m-1} & = \inv{z}C\left(\inv{z}\right) = \inv{2}+\sqrt{\inv{4}-\inv{z}}.
	\intertext{The free cumulants of $\eta_n^2$ are all equal to 1 and the R-transform is given by}
	R_n(z)  & = \inv{1-z},\quad\quad \abs{z}<1.
\end{align*}
\noindent Using the properties of the R-transform mentioned in Remark~\ref{Rmk:PropRTf}) we obtain for $\abs{z}<\pi^2$
\begin{align*}
	R_\Gamma(z) & = \sum_{n=1}^\infty \inv{\pi^2 n^2} R_n\left(\frac{z}{\pi^2 n^2}\right) = \sum_{n=1}^\infty \inv{n^2\pi^2 - z} = \frac{1-\sqrt{z}\cot(\sqrt{z})}{2 z}
\end{align*}
as claimed.
	\end{proof}
\end{prop}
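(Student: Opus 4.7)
The plan is to exploit the free \ts{Kac} representation~\eqref{Eq:KacRep} together with the multilinearity, scaling and continuity properties of the R-transform collected in Remark~\ref{Rmk:PropRTf}, so that the computation reduces to summing a classical Mittag-Leffler series.

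First I would substitute the free \ts{Kac} series for $\beta_1$ into $\Gamma=\int_0^1\beta_1(t)^2\,\d t$ and use $\Leb^2[0,1]$-orthogonality of the eigenfunctions appearing in the expansion to eliminate the cross terms $\eta_m\eta_n$ with $m\ne n$. This yields
\[
\Gamma \;=\; \frac{1}{\pi^2}\sum_{n=1}^\infty \frac{1}{n^2}\,\eta_n^2,
\]
viewed as an absolutely convergent series in the operator norm of $\A$: since $\|\eta_n\|=2$ one has $\|\eta_n^2\|=4$ and $\sum n^{-2}<\infty$. Each $\eta_n^2$ is a free Poisson element of unit rate and unit jump size (\ts{Nica--Speicher}, Proposition~12.13), so all its free cumulants equal $1$ and $R_{\eta_n^2}(z)=(1-z)^{-1}$ on $|z|<1$.

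Next, since the $\eta_n$ are free, so are the $\eta_n^2$. Applying the scaling rule from Remark~\ref{Rmk:PropRTf} with $\lambda=(n\pi)^{-2}$ yields $R_{\eta_n^2/(n\pi)^2}(z)=1/(n^2\pi^2-z)$. Additivity of the R-transform for the truncated sum $\Gamma_N=\pi^{-2}\sum_{n=1}^N n^{-2}\eta_n^2$ then gives $R_{\Gamma_N}(z)=\sum_{n=1}^N(n^2\pi^2-z)^{-1}$, and the continuity clause of Remark~\ref{Rmk:PropRTf} produces, on a neighbourhood of the origin,
\[
R_\Gamma(z)\;=\;\sum_{n=1}^\infty\frac{1}{n^2\pi^2-z}.
\]

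Finally I would identify this sum in closed form via the classical partial-fraction expansion $\pi w\cot(\pi w)=1+2\sum_{n\ge 1}w^2/(w^2-n^2)$; substituting $w=\sqrt{z}/\pi$ and simplifying produces exactly $(1-\sqrt{z}\cot\sqrt{z})/(2z)$, as claimed. The main technical obstacle is the limiting step: one must verify that $\Gamma_N\to\Gamma$ in the norm of $\A$, so that the continuity statement of Remark~\ref{Rmk:PropRTf} applies, and that the partial sums $\sum_{n=1}^N(n^2\pi^2-z)^{-1}$ converge uniformly on some fixed neighbourhood of $0$, so that the limit is genuinely analytic and coincides with the R-transform of $\Gamma$. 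Both facts are routine consequences of the $n^{-2}$ decay of the coefficients.
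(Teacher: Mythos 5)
Your proposal is correct and follows essentially the same route as the paper: reduce via the Kac representation to $\Gamma=\pi^{-2}\sum_n n^{-2}\eta_n^2$, use that each $\eta_n^2$ is free Poisson with $R_{\eta_n^2}(z)=(1-z)^{-1}$, apply the scaling/additivity/continuity properties of the R-transform, and sum the resulting Mittag--Leffler series via the cotangent partial-fraction expansion. You merely make explicit some convergence details that the paper leaves implicit.
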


\noindent The free cumulants of $\Gamma$ are therefore given by

\begin{align*}
	k_m & = \frac{\zeta(2m)}{\pi^{2m}} = (-4)^m\,\frac{B_{2m}}{(2m)!}
	\intertext{where $B_n$ is the $n^\text{th}$ Bernoulli number and $\zeta$ the Riemann zeta function. With (\ref{Eq:DefCum}) we obtain a formula for the moments involving a sum over non-crossing partitions:}
	\phi\left(\Gamma^n\right) & =\ \inv{\pi^{2n}}\sum_{\sigma\in\NC(n)} \prod_{r=1}^{m_\sigma} \zeta(2l_r^\sigma)	 = (-4)^n \sum_{\sigma\in\NC(n)}  \prod_{r=1}^{m_\sigma} \frac{B_{2l_r^\sigma}}{(2l_r^\sigma)!} 
\end{align*}
\noindent where $m_\pi$ denotes the number of equivalence classes of $\pi$ and $l_r^\pi$ is the size of the $r^\text{th}$ equivalence class of $\pi$.

While there does not seem to exist a closed-form expression for the inverse of $K_\Gamma(z)=R_\Gamma(z)-\inv{z}$ (and hence, by the Stieltjes inversion formula, for the density) we will describe some properties of the law $\mu_\Gamma$ of $\Gamma$. We will prove that $\mu_\Gamma$ is freely infinitely divisible, has a positive analytic density on a single interval and give an equation for the right end point of that interval.

\subsection{Free Infinite Divisibility}

\label{Sec:FIDBP}

The concept of infinite divisibility has a natural analogue in free probability theory. Noting that the square norm of the free Brownian bridge is freely infinitely divisible we will use the approach of P.~\ts{Biane} in his appendix to the paper \cite{BercoviciPata99} to prove that the law of $\Gamma$ has a smooth density on its support and give an implicit formula for that density.

\begin{defn}
	A compactly supported probability measure $\mu$ is said to be \textit{freely infinitely divisible} (or \textit{\FID}) if for every $n\in\N$ there exists a compactly probability measure $\mu_n$ such that
\begin{align*}
	\mu & = \mu_n^{\boxplus n} = \underbrace{\mu_n\boxplus\ldots\boxplus\mu_n}_{n\text{ times}}
\end{align*}
where $\boxplus$ denotes free convolution (Section~\ref{Sec:FPT}).
\end{defn}

\par\noindent Since each $\xi_n$ has a free Poisson distribution and is therefore freely infinitely divisible it follows that $\Gamma$ is also \FID.

Recall that the Cauchy transform $G_\Gamma$ of $\Gamma$ is an analytic map from the upper half plane $\C^+$ into the lower half plane $\C^-$, which is locally invertible on a neighbourhood of infinity, and that its local inverse is given by the K-transform $K_\Gamma$ where
	\begin{align*}
		K_\Gamma(z) & = R_\Gamma(z)+\inv{z} = \frac{3-\sqrt{z}\cot\left(\sqrt{z}\right)}{2z}.
	\end{align*}	
 From Proposition 5.12 in \ts{Bercovici}--\ts{Voiculescu} \cite{BercoviciVoiculescu93} and the infinite divisibility of $\Gamma$ it is straightforward to deduce the following result.

\begin{lem}
	\label{Thm:ImageGGamma}
	The law $\mu_\Gamma$ of the square norm of the free Brownian bridge can have at most one atom. Moreover its Cauchy transform $G_{\Gamma}$ is an analytic injection from $\C^+$ whose image is the connected component $\Omega$ in $\C^-$ of
\begin{align*}
	\wh\Omega & = \{z\in\C^-\colon \Im \left(K_\Gamma(z)\right)>0 \}
\end{align*}
	that contains $iy$ for small values of $y$.
\end{lem}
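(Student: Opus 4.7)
The plan is to appeal directly to Proposition~5.12 of \ts{Bercovici}--\ts{Voiculescu} \cite{BercoviciVoiculescu93}, so that the proof reduces to verifying its hypotheses and interpreting its conclusion for the specific measure $\mu_\Gamma$. The standing hypotheses in that proposition are compact support and free infinite divisibility: the former holds because $\Gamma$ is a bounded self-adjoint element of a \VNA, while the latter was established in the preceding paragraphs.

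First, I would extract the atom statement: a general conclusion in the Bercovici--Voiculescu framework is that every \FID\ measure has at most one atom, so this applies verbatim to $\mu_\Gamma$. Next, I would translate the analytic description of the image. The \FID\ property guarantees that $R_\Gamma$, which a priori is only defined on a neighbourhood of zero, extends analytically to all of $\C^-$, and hence so does $K_\Gamma(z)=R_\Gamma(z)+\inv{z}$. The identity $K_\Gamma\circ G_\Gamma=\mathrm{id}$ initially holds only on a neighbourhood of infinity in $\C^+$, but with $K_\Gamma$ now globally analytic on $\C^-$ this extends by analytic continuation, yielding both the inclusion $G_\Gamma(\C^+)\subseteq\wh\Omega$ and the injectivity of $G_\Gamma$ on $\C^+$.

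To pin down the correct connected component, I would examine the behaviour of $G_\Gamma$ near infinity: $G_\Gamma(iy)\sim\inv{iy}$ as $y\to+\infty$, so the image passes through points of the form $iy'$ with $y'$ small (with negative imaginary part), which is the geometric content of the phrase ``contains $iy$ for small values of $y$''. Connectedness of $\C^+$ together with continuity of $G_\Gamma$ then force the image to lie in a single connected component of $\wh\Omega$, namely $\Omega$. Surjectivity of $G_\Gamma\colon\C^+\to\Omega$ is the remaining half of the Bercovici--Voiculescu statement: on $\Omega$, the map $K_\Gamma$ provides an analytic two-sided inverse, which forces $G_\Gamma$ to hit every point of $\Omega$.

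The only nontrivial ingredient beyond routine manipulations of Cauchy transforms is the global analytic extension of $R_\Gamma$ to $\C^-$, which in general rests on the free analogue of the L\'evy--Khintchine representation for \FID\ measures; since this ingredient is supplied by \cite{BercoviciVoiculescu93}, the main conceptual obstacle is effectively outsourced, and the lemma reduces to an instance-check of a general principle.
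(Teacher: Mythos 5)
Your proposal is correct and follows the same route as the paper, which likewise deduces the lemma directly from Proposition~5.12 of \ts{Bercovici}--\ts{Voiculescu} together with the free infinite divisibility of $\mu_\Gamma$; you have simply made explicit the hypothesis-checking and the identification of the component $\Omega$ that the paper leaves as ``straightforward''. No gaps.
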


\par\noindent It will be useful to characterise the boundary $\partial\Omega$.

\begin{lem}
	For every $t\in (\pi,2\pi)$ there exists unique $r(t)>0$ such that $\Im\left[\left(K_\Gamma(r(t) e^{it})\right)\right]=0$. Moreover
\begin{align}
	\label{Eq:DerNonZero}
	\left.\frac{\partial}{\partial z} \Im K_\Gamma(z)\right|_{z=r(t)e^{it}}&\ne 0\quad\quad\qquad\forall\,t\in(\pi,2\pi).
\end{align}
\begin{proof}
	Fix $t\in(\pi,2\pi)$. The imaginary part of $K_\Gamma$ can be written in polar co-ordinates by
	\begin{align*}
			h_t(r) := K_\Gamma\left(r\, e^{it}\right) & = -\frac{3\sin(t)}{2r} + \frac{\gamma\sinh(\sigma\sqrt{r})\cosh(\sigma\sqrt{r}) + \sigma\sin(\gamma\sqrt{r})\cos(\gamma\sqrt{r})}{2\sqrt{r}\left(\sin^2(\gamma\sqrt{r}) + \sinh^2(\sigma\sqrt{r})\right)}
	\intertext{where $\sigma=\sin(t/2)$ and $\gamma=\cos(t/2)$. Define $g_t(r)=2r\,h_t(r^2)$. Then}
		g_t(r) &= -\frac{6\sigma\gamma}{r} + \frac{\sigma\sin\left(2\gamma r\right)+\gamma\sinh\left(2\sigma r\right)}{2\left[\sin^2(\gamma\sqrt{r}) + \sinh^2(\sigma\sqrt{r})\right]}.
	\end{align*}
	It is a lengthy but simple calculation to prove the existence of unique $\rho(t)>0$ such that $g_t(\rho(t))=0$ and that $g_t'(\rho(t))<0$. The result follows.
\end{proof}
\end{lem}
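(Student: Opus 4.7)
My plan is to bypass the explicit trigonometric form of $g_t$ and work instead with the partial-fraction representation $R_\Gamma(z)=\sum_{n\ge 1}(n^2\pi^2-z)^{-1}$ derived in the proof of Proposition~\ref{Thm:Rtf}. Using $K_\Gamma=R_\Gamma+1/z$ and taking imaginary parts on the ray $z=re^{it}$, $t\in(\pi,2\pi)$, gives
\begin{equation*}
\Im K_\Gamma(re^{it})=\sin(t)\,\phi(r),\qquad \phi(r):=\sum_{n\ge 1}\frac{r}{D_n(r)}-\frac{1}{r},
\end{equation*}
where $D_n(r)=|n^2\pi^2-re^{it}|^2=n^4\pi^4-2n^2\pi^2 r\cos(t)+r^2$. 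Since $\sin t\neq 0$, the condition $\Im K_\Gamma(re^{it})=0$ reduces to $\phi(r)=0$, equivalently to $\psi(r):=\sum_n r^2/D_n(r)=1$. This is the same equation as the author's $g_t(\sqrt{r})=0$ after a routine substitution, so the lemma reduces to showing that $\psi$ hits the level $1$ exactly once and transversally on $(0,\infty)$.

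For existence I would analyse endpoints: $\psi(r)\to 0$ as $r\downarrow 0$, because the convergent sum $\sum 1/(n^4\pi^4)$ is killed by the $r^2$ factor, while $\psi(r)\to\infty$ as $r\to\infty$ follows from the classical asymptotic $\cot w\to -i$ as $\Im w\to+\infty$ applied to the formula for $R_\Gamma$, giving $\psi(r)\sim\sqrt r/(4\sin(t/2))$. The intermediate value theorem yields at least one zero $r_0$ of $\phi$. For uniqueness I would show $\phi'(r_0)>0$ at every zero; if two such zeros existed, the graph of $\phi$ would have to pass from positive to negative between them, forcing an intermediate zero at which $\phi'\leq 0$, a contradiction.

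Differentiating $\psi$ and using the trivial identity $\sum D_n/D_n^2=\sum 1/D_n$, one obtains at a point $r_0$ with $\psi(r_0)=1$ (so $\sum 1/D_n(r_0)=1/r_0^2$) the expression
\begin{equation*}
\phi'(r_0)=\frac{2}{r_0^2}\Bigl(1+r_0^3\cos(t)\,T_1-r_0^4 S_2\Bigr),\qquad T_1=\sum_n\frac{n^2\pi^2}{D_n(r_0)^2},\quad S_2=\sum_n\frac{1}{D_n(r_0)^2}.
\end{equation*}
When $\cos t\le 0$ (i.e.\ $t\in(\pi,3\pi/2]$), the series $\psi'(r)=2r\sum_n n^2\pi^2(n^2\pi^2-r\cos t)/D_n^2$ is positive term by term, so $\psi$ is strictly increasing and the claim is immediate.

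The main obstacle is the regime $\cos t>0$, i.e.\ $t\in(3\pi/2,2\pi)$, where the terms of $\psi'$ have mixed signs and neither global monotonicity nor term-by-term positivity is available. The key observation is the elementary inequality $(\sum x_n)^2\ge\sum x_n^2$ for non-negative $x_n$, strict whenever more than one is nonzero: applying it with $x_n=1/D_n(r_0)$ gives $S_2<(\sum 1/D_n)^2=1/r_0^4$, hence $r_0^4 S_2<1$, while $r_0^3\cos(t)\,T_1>0$, and therefore $\phi'(r_0)>0$. Finally, since $K_\Gamma$ is analytic, $|\nabla\Im K_\Gamma(z)|=|K_\Gamma'(z)|$, so the nonvanishing of the radial derivative $\sin(t)\phi'(r_0)$ immediately implies $K_\Gamma'(r(t)e^{it})\ne 0$, which is (\ref{Eq:DerNonZero}).
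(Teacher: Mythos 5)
Your argument is correct, and it takes a genuinely different route from the paper. The paper works with the closed-form expression $K_\Gamma(z)=\frac{3-\sqrt z\cot\sqrt z}{2z}$, writes its imaginary part in polar coordinates as an explicit trigonometric/hyperbolic quotient $h_t(r)$, substitutes $g_t(r)=2rh_t(r^2)$, and then simply asserts that a ``lengthy but simple calculation'' yields a unique zero with nonvanishing derivative; no details of that calculation are given. You instead exploit the Mittag--Leffler form $K_\Gamma(z)=\sum_n(n^2\pi^2-z)^{-1}+1/z$, which converts the problem into a statement about the positive-term series $\psi(r)=\sum_n r^2/D_n(r)$: existence comes from the endpoint limits $\psi\to0$ and $\psi\to\infty$ (your asymptotic $\psi(r)\sim\sqrt r/(4\sin(t/2))$ checks out against $\cot w\to-i$), and uniqueness comes from showing $\phi'>0$ at every zero, term-by-term when $\cos t\le 0$ and, when $\cos t>0$, via the identity $\phi'(r_0)=\tfrac{2}{r_0^2}\bigl(1+r_0^3\cos(t)T_1-r_0^4S_2\bigr)$ together with the strict inequality $S_2<\bigl(\sum_n 1/D_n\bigr)^2=1/r_0^4$. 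I verified this identity (using $n^4\pi^4-n^2\pi^2r_0c=D_n+n^2\pi^2r_0c-r_0^2$ and $\sum_n 1/D_n=1/r_0^2$ at a zero) and it is right; the final reduction of \eqref{Eq:DerNonZero} to the radial derivative $\sin(t)\phi'(r_0)\ne0$ via the Cauchy--Riemann equations is also sound. What your approach buys is an actual verifiable proof where the paper has a black box, and the positivity structure of the series does the work that the paper delegates to an unspecified computation; what the paper's closed form buys is the explicit curve $\lambda=\{r_te^{it}\}$ used later for numerics. Two points worth a sentence each in a final write-up: justify the termwise differentiation of $\psi$ by local uniform convergence (clear since $D_n(r)\ge r^2\sin^2 t$ and $D_n\sim n^4\pi^4$), and note that the partial-fraction expansion, derived in the paper only for $|z|<\pi^2$, extends to all $z=re^{it}$ with $t\in(\pi,2\pi)$ by the classical cotangent expansion, since such $z$ never meets the poles $n^2\pi^2$.
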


\par\noindent Therefore $\wh\Omega$ is actually simply connected: it is given by the area enclosed by the real axis and the curve $\lambda=\{r_t e^{it}\colon t\in (\pi,2\pi)\}$. In particular $\Omega=\wh\Omega$ and $\partial\Omega$ is a continuous simple curve. So Carath\'eodory's theorem applies, wherefore the analytic bijection $G_\Gamma\colon\C^+\map\Omega$ extends to a homeomorphism (denoted $\wh{G}_\Gamma$) from $\C^+\cup\R\cup\{\infty\}$ to the closure $\overline\Omega$ of $\Omega$ in $\C\cup\{\infty\}$.
\par Since $\Omega$ is bounded, so is its closure, whence $\wh{G}_\Gamma$ is finite on $\C^+\cup\R\cup\{\infty\}$. The set of isolated points of the support of $\mu_\Gamma$ is exactly the set of $t\in\R$ such that $\wh{G}_\Gamma(t)=\infty$ so $\spt( \mu_\Gamma)$ must be an interval $[a,b]$. From the Stieltjes inversion formula (see for example \cite{HiaiPetz}, p.93) it now follows that if we put for $x\in[a,b]$
	\begin{align}
		\Phi(x) & = -\inv{\pi}\lim_{y\to 0} \Im\left(G_\Gamma(x+iy)\right) = -\inv{\pi}\, \Im\left(\wh{G}_\Gamma(x)\right)
	\end{align}
then $\mu_\Gamma$ has density $\Phi$ with respect to Lebesgue measure on $[a,b]$. Since $K_\Gamma$ is the inverse of $G_\Gamma$ and because of (\ref{Eq:DerNonZero}) the implicit function theorem applies and hence $\Phi$ is smooth on $[a,b]$. Moreover it follows that
	\begin{align*}	
	\spt\mu_\Gamma &=K_\Gamma\left(\partial\Omega\cap\C^-\right)  =\left[K_\Gamma\left(r_{\pi+}\right) \wedge K_\Gamma\left(r_{2\pi-}\right), K_\Gamma\left(r_{\pi+}\right)\vee K_\Gamma\left(r_{2\pi-}\right)\right].
	\end{align*}
	where $r_{\pi+}=\lim_{s\downarrow 0} r_{\pi+s}$ and $r_{2\pi-}= \lim_{s\downarrow 0} r_{2\pi-s}$.
\par The operator $\Gamma$ is positive and its norm is less than 1, so the support of $\mu_\Gamma$ must be contained in the unit interval. We summarise the results of this section.

\begin{prop}
	\label{Thm:SqNormSummarise}
	There exist $a,b\in\R$ such that $0\leq a<b\leq 1$ and a positive smooth function $\Phi\colon [a,b]\map\R$ such that
	\begin{align}
		\mu_\Gamma(\d t) & = \Phi(t)\one_{[a,b]}(t).
	\end{align}
\par\noindent The function $\Phi$ is given by $\Phi(x) = -\inv{\pi}r(t_x)\sin(\tau_x)$ where $\tau_x\in (\pi,2\pi)$ is the unique solution to $K_\Gamma\left(r(\tau_x)\,e^{i\tau_x}\right)=x$.
\end{prop}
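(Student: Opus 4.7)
The plan is to harvest the analytic machinery built up in Section~\ref{Sec:FIDBP} and package it into the three claims of the proposition: that the support is an interval $[a,b]\subset [0,1]$, that $\mu_\Gamma$ has a smooth density there, and that this density admits the stated parametric description. The starting point is free infinite divisibility of $\Gamma$, which is automatic because $\Gamma$ is an infinite free convolution of free Poisson variables (each $\eta_n^2$ is free Poisson, and the scalars $(\pi n)^{-2}$ preserve FID). This feeds into Lemma~\ref{Thm:ImageGGamma}, providing the analytic bijection $G_\Gamma\colon\C^+\map\Omega$ whose inverse is the restriction of $K_\Gamma$, and guaranteeing at most one atom.

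Next I would use the boundary analysis already performed: the uniqueness of $r(t)$ solving $\Im K_\Gamma(r(t)e^{it})=0$ for $t\in(\pi,2\pi)$ shows that $\partial\Omega$ consists of a segment of the real axis together with the simple curve $\lambda=\{r(t)e^{it}\colon t\in(\pi,2\pi)\}$. Hence $\Omega$ is a simply connected bounded Jordan domain and Carath\'eodory's theorem extends $G_\Gamma$ to a homeomorphism $\wh G_\Gamma\colon \C^+\cup\R\cup\{\infty\}\map\overline\Omega$. Since $\overline\Omega$ is bounded, $\wh G_\Gamma$ is finite on the real line, so $\mu_\Gamma$ has no isolated atoms and its support is the single interval $[a,b]=K_\Gamma(\partial\Omega\cap\C^-)$. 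Positivity of $\Gamma$ gives $a\geq 0$, and $\|\Gamma\|<1$ (obtained from $\int_0^1\beta(t)^2\d t\leq \sup_t \beta(t)^2$ and standard norm bounds on the free Brownian bridge, or more bluntly from the series expansion together with positivity of each term) delivers $b\leq 1$; the inequality $a<b$ is immediate from the fact that $\mu_\Gamma$ is nondegenerate.

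For the density, the Stieltjes inversion formula yields $\Phi(x)=-\pi^{-1}\Im \wh G_\Gamma(x)$. Given $x\in[a,b]$, the corresponding boundary point of $\Omega$ is $r(\tau_x)e^{i\tau_x}$, where $\tau_x\in(\pi,2\pi)$ is the unique angle with $K_\Gamma(r(\tau_x)e^{i\tau_x})=x$; uniqueness is inherited from the uniqueness of $r(t)$ together with strict monotonicity of $K_\Gamma$ along $\lambda$ established implicitly by the homeomorphism. Taking imaginary parts then gives $\Phi(x)=-\pi^{-1}r(\tau_x)\sin(\tau_x)$. Finally, smoothness of $\Phi$ follows from the implicit function theorem applied to $K_\Gamma$ restricted to $\lambda$: the nonvanishing derivative condition~(\ref{Eq:DerNonZero}) ensures that $\tau_x$ and hence $r(\tau_x)$ depend smoothly on $x$. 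Positivity of $\Phi$ on $(a,b)$ is automatic because $\sin(\tau_x)<0$ for $\tau_x\in(\pi,2\pi)$.

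The main obstacle to this plan is not any single step in isolation but the verification that all the ingredients fit together at the endpoints, in particular that the limits $r_{\pi+}$ and $r_{2\pi-}$ are finite and that $\wh G_\Gamma$ genuinely extends continuously to $\R\cup\{\infty\}$; both rely on Carath\'eodory and on the simple-curve property of $\lambda$, already established by the preceding lemma. Once these are in hand the proposition reduces to reading off the conclusions, so the proof will essentially be a short appeal to the preceding discussion.
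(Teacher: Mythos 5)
Your proposal is correct and follows essentially the same route as the paper: the paper offers no separate proof of this proposition, since it is explicitly a summary of the preceding discussion in Section~\ref{Sec:FIDBP} (free infinite divisibility, Lemma~\ref{Thm:ImageGGamma}, the boundary curve $\lambda$, Carath\'eodory, Stieltjes inversion, and the implicit function theorem via~(\ref{Eq:DerNonZero})), which is exactly the machinery you assemble. Your added justifications of $\|\Gamma\|<1$ and of the endpoint behaviour are harmless elaborations of points the paper simply asserts.
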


\par\noindent Below is a sketch of the density function based on numerical computations.\\

\begin{figure}[H]
	\begin{center}
	\includegraphics[scale = .5]{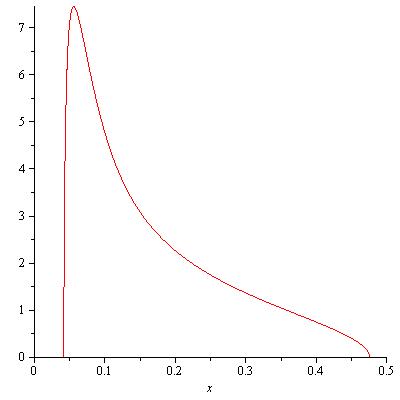}
	\parbox{11cm}{\caption{\footnotesize{Density of the $\Leb^2$-norm of the free Brownian bridge.}}}
\end{center}

\end{figure}

\subsection{The Maximum of the Support}

We now study the maximum of the support of $\mu_\Gamma$. We will need Theorem 5.4 from \cite{LDP_NCP}:

\begin{thm}
	\label{Thm:EdgePositive}
	Let $\mu$ be a compactly supported probability measure on $[0,\infty)$ such that its free cumulants $(k_j)_{j\in\N}$ are all positive. Then the right edge $\rho_\mu$ of the support of $\mu$ is given by
	\begin{align}
		\label{Eq:EdgePositive}
		\log\rho_\mu & = \sup\left\{\inv{m_1(p)}\sum_{m=1}^\infty p_m\log\left(\frac{k_m}{p_m}\right) + \frac{\Theta\big(m_1(p) \big)}{m_1(p)} \colon p\in\m_1^1(\N)\right\}
	\end{align}	
	where $\m_1^1(\N)=\{p\in\m_1(\N)\colon m_1(p)<\infty\}$ is the set of probability measures on \N\ with finite mean and $\Theta(m)=\log(m-1) - m\log\left(1-\inv{m}\right)$.
\end{thm}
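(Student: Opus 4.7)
The plan is to express $\log\rho_\mu$ as the exponential growth rate of the moments of $\mu$ and then evaluate that rate by a large-deviations analysis of the sum over non-crossing partitions produced by the moment-cumulant formula.

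Since $\mu$ is compactly supported on $[0,\infty)$ with non-negative free cumulants, every moment $\phi(a^n)=\int t^n\mu(\d t)$ is a sum of non-negative terms and a Cauchy--Hadamard argument gives
\[
\log\rho_\mu \;=\; \lim_{n\to\infty}\tfrac{1}{n}\log\phi(a^n) \;=\; \lim_{n\to\infty}\tfrac{1}{n}\log\sum_{\pi\in\NC(n)}\prod_{V\in\pi} k_{\abs{V}}.
\]
The next step is to group this sum by the empirical block-size profile $L_\pi\in\m_1^1(\N)$ of $\pi$: if $\pi$ has $r$ blocks and $L_\pi=p$, then $r=n/m_1(p)$ and $\prod_{V\in\pi}k_{\abs{V}}=\prod_m k_m^{rp_m}$.

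The combinatorial input is Kreweras' enumeration: the number of $\pi\in\NC(n)$ with exactly $rp_m$ blocks of size $m$ equals
\[
\frac{n!}{(n-r+1)!\prod_m(rp_m)!}.
\]
A Stirling calculation, using the algebraic identity $\log m=\log(m-1)-\log(1-1/m)$ to convert $-(1-1/m)\log(1-1/m)+(1/m)\log m$ into $\Theta(m)/m$, yields the asymptotic
\[
\tfrac{1}{n}\log|\{\pi\in\NC(n)\colon L_\pi\approx p\}| \;\longrightarrow\; \frac{\Theta(m_1(p))+H(p)}{m_1(p)},
\]
with $H(p)=-\sum_m p_m\log p_m$. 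Inserting the weight $\prod_m k_m^{rp_m}$ and applying Varadhan's lemma (equivalently, Laplace's method for a sum with an LDP structure) gives
\[
\lim_{n\to\infty}\tfrac{1}{n}\log\phi(a^n) \;=\; \sup_{p\in\m_1^1(\N)}\left\{\frac{1}{m_1(p)}\sum_m p_m\log\frac{k_m}{p_m}+\frac{\Theta(m_1(p))}{m_1(p)}\right\},
\]
after merging $H(p)$ with $\sum_m p_m\log k_m$ into the single term $\sum_m p_m\log(k_m/p_m)$.

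The main obstacle is analytic rather than combinatorial: the space $\m_1^1(\N)$ is non-compact because $m_1(p)$ may diverge, and the tilt $p\mapsto m_1(p)^{-1}\sum p_m\log k_m$ is unbounded. To justify Varadhan's lemma I would first use the compactness of $\spt\mu$ to obtain an exponential bound $k_m\leq C r^m$ (from analyticity of the $R$-transform at $0$, cf.~\cite{HiaiPetz}) and then establish exponential tightness for $L_\pi$ under the uniform measure on $\NC(n)$. Truncating to profiles supported on a bounded range of block sizes, where the tilt is continuous, and then passing to the limit by monotone convergence extends the variational identity to all of $\m_1^1(\N)$; the positivity hypothesis on the $k_m$ is essential here, since it prevents any cancellation that could spoil the asymptotic identification of the weighted partition sum with its largest exponential contribution.
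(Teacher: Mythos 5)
Your proposal is sound and reconstructs what is evidently the intended argument: the paper does not prove this theorem itself but imports it from \cite{LDP_NCP}, which (as the introduction states) establishes a large deviations principle for the block-size profile of a uniform non-crossing partition and derives the edge formula from it via Varadhan's lemma --- exactly your route of writing $\log\rho_\mu=\lim_n n^{-1}\log\phi(a^n)$, stratifying the moment--cumulant sum by profile, and applying Kreweras' enumeration plus Stirling (your identification of the entropy--combinatorial term with $\bigl(\Theta(m_1(p))+H(p)\bigr)/m_1(p)$ checks out). The remaining work you correctly isolate --- the upper bound over the non-compact space $\m_1^1(\N)$, using $k_m\leq Cr^m$ from compact support, the subexponential number $e^{O(\sqrt n)}$ of integer-partition profiles, and positivity of the $k_m$ to rule out cancellation --- is the substantive analytic content of the cited reference, and your truncation-plus-limit plan for it is a standard and workable way to carry it out.
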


\par\noindent It turns out that this variational problem can be solved using the method of Laplace multipliers. There exists a unique maximiser $p^*$ for the supremum on the right-hand side of (\ref{Eq:EdgePositive}). Using the series expansion of $\zeta(2n)$ and interchanging summation we obtain
\begin{align}
	\notag
	p^*_n & = \inv{m^*-1}\, \zeta(2n) \left(\frac{\gamma}{\pi}\right)^{2n}
	\intertext{where $\gamma$ is a rational function of $m^*$ and $m^*$ is the unique solution on $\left(\frac{2}{3},\infty\right)$ of the equation}
	\label{Eq:MStarSquare}
	m-3 & = \sqrt{4m^2-2m-6}\, \cot\left(\frac{4m^2-2m-6}{m-1} \right)
\end{align}
In the end we obtain an implicit equation for the right edge of the support of $\mu_\Gamma$:

\begin{prop}
	The number $b$ from Proposition~\ref{Thm:SqNormSummarise} is given by
	\begin{align*}
		b = \frac{\left(m^*\right)^2-m^*}{4\left(m^*\right)^2-2m^*-6}
	\end{align*}
	where $m^*$ is the unique solution of $(\ref{Eq:MStarSquare})$ on $\left(\frac{2}{3},\infty\right)$.
\end{prop}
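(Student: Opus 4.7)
The plan is to apply Theorem~\ref{Thm:EdgePositive} to $\mu_\Gamma$, whose cumulants $k_m = \zeta(2m)/\pi^{2m}$ are all strictly positive, and to solve the variational problem~(\ref{Eq:EdgePositive}) by Lagrange multipliers in two nested stages.

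For the inner stage, I would fix $M = m_1(p)$ and maximise $\sum p_m \log(k_m/p_m)$ subject to $\sum p_m = 1$ and $\sum m p_m = M$; the standard calculation yields the Gibbs form $p^{(M)}_m = k_m y^m / S(y)$, where $y = e^{-\beta}$ for $\beta$ the Lagrange multiplier and $S(y) := \sum_{m \geq 1} k_m y^m$. Interchanging the order of summation in $S(y) = \sum_m \zeta(2m)(y/\pi^2)^m$ (valid for $y < \pi^2$ by absolute convergence) and invoking the partial-fraction identity used in the proof of Proposition~\ref{Thm:Rtf} gives the closed form $S(y) = (1 - \sqrt y \cot\sqrt y)/2$. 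A direct differentiation then yields $T(y) := y S'(y) = \tfrac{1}{4}\bigl[u^2 + (u\cot u)^2 - u\cot u\bigr]$ with $u = \sqrt y$, and the dual-to-mean relation is $M = T(y)/S(y)$.

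For the outer stage, the reduced objective is $J(M) = \beta + [\log S + \Theta(M)]/M$ with $\Theta(M) = M\log M - (M-1)\log(M-1)$. Using the cumulant-generating identity $d\log S/d\beta = -M$, the $d\beta/dM$ contributions cancel in $dJ/dM$, and the critical-point equation collapses to $M\Theta'(M) - \Theta(M) = \log S$. Since $\Theta'(M) = \log(M/(M-1))$, one checks directly that $M\Theta'(M) - \Theta(M) = -\log(M-1)$, so the optimality condition reads $S(y) = 1/(M-1)$, equivalently $\sqrt y \cot\sqrt y = (M-3)/(M-1)$. Substituting this identity into the explicit form of $T$ and imposing $M = T/S$ eliminates the trigonometric factor, leaving $y = (4M^2 - 2M - 6)/(M-1)^2$; reinjecting into the optimality condition produces the transcendental equation~(\ref{Eq:MStarSquare}) for $m^* := M$. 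At this stationary point $\log b = J(m^*) = \beta + \log(M/(M-1))$, hence $b = M/[(M-1)y] = (M^2 - M)/(4M^2 - 2M - 6)$, as claimed.

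The main obstacle is confirming that this Lagrange critical point is the global supremum and that~(\ref{Eq:MStarSquare}) has a unique root in $(2/3, \infty)$. Unimodality of $J$ along the admissible curve $M \mapsto y(M)$ should follow from a sign analysis of $J'(M) \propto -(\log S + \log(M-1))$ combined with the monotonicity of $S$ on $(0, \pi^2)$; uniqueness of $m^*$ then reduces to a monotonicity check on the two sides of~(\ref{Eq:MStarSquare}), in the same spirit as the boundary-curve analysis performed in Section~\ref{Sec:FIDBP}.
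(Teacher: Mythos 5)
Your proposal is correct and follows exactly the route the paper indicates (the paper itself gives no details beyond ``the variational problem can be solved using the method of Lagrange multipliers'' and the Gibbs form of the maximiser, so your two-stage computation is a genuine filling-in of the argument): the inner maximisation gives $p_m\propto k_m y^m$, the identity $S(y)=\sum_m k_m y^m=\tfrac12\bigl(1-\sqrt{y}\cot\sqrt{y}\bigr)$ follows from the proof of Proposition~\ref{Thm:Rtf}, the outer stationarity condition collapses to $S(y)=1/(M-1)$, and eliminating $y$ yields $b=M/((M-1)y)=(M^2-M)/(4M^2-2M-6)$. One point worth recording: your derivation gives the critical equation
\begin{align*}
m-3 &= \sqrt{4m^2-2m-6}\;\cot\!\left(\frac{\sqrt{4m^2-2m-6}}{m-1}\right),
\end{align*}
i.e.\ with the square root inside the cotangent; this differs from the printed form of (\ref{Eq:MStarSquare}) but is consistent with the analogous equation (\ref{Eq:MLA}) for the L\'evy area, so the printed (\ref{Eq:MStarSquare}) appears to carry a typographical error (likewise the natural domain forced by $y>0$ is $m>3/2$ rather than $m>2/3$, and the prefactor of $p^*_n$ should be $m^*-1$ rather than $1/(m^*-1)$). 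The global-optimality and uniqueness check you defer is genuinely needed but goes through as you sketch: $J'(M)\propto -\bigl(\log S(y(M))+\log(M-1)\bigr)$, and since $y(M)$ and $S$ are increasing this expression decreases from $+\infty$ (as $M\downarrow 1$, where both logarithms behave like $\log y\to-\infty$) to $-\infty$, so $J$ has a unique interior maximum; the paper does not address this either.
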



\section{The Signature of the Free Brownian Bridge}

\label{Sec:SigFBB}

\subsection{Signature and Rough Paths}

In T.~\ts{Lyons}'s paper \cite{Lyons98} a new approach to differential equations driven by rough paths is proposed. For a general Banach-valued path $p\colon\R_+\map E$ we define, when this makes sense, the \textit{signature} of $p$ to be the process $S(p)$ taking values in the tensor algebra $T((E))=\bigoplus_{n=0}^\infty E^{\otimes n}$ whose $n^\text{th}$ component is given by the $n$-times iterated integral against $p$:
\begin{align*}
	S(p)_n(t) &= \int_{0<t_1<\ldots<t_n<t}\d p\,(t_1)\otimes\ldots\otimes\d p\,(t_n).
	\intertext{The signature is then used to solve general differential equations of the form}
	\d\, S(t) & = S(t)\tensor\d p\,(t).
\end{align*}

\par\noindent In order to show that this works if the path in question is a free Brownian motion $X$, \ts{Capitaine}--\ts{Donati-Martin}\cite{CDM_Levy} define an integral of a class of suitable processes $\Proc$ against $X$ that yields a process taking values in the tensor product $\A\otimes\A$ and prove that $X$ itself is contained in \Proc. The integral is defined taking Riemann-type approximations, so it is straightforward to extend it to processes with finite variation. Using Remark~\ref{Rmk:BridgeMotion} we can therefore define the \textit{second component of the signature} of a free Brownian bridge $\beta$ on $[0,2\pi]$ by
\begin{align*}
	Z(t) & = \int_0^t\beta\tensor\d\,\beta\quad\quad t\in [0,2\pi]
\end{align*}

\noindent where the integral is in the sense of \cite{CDM_Levy}, see also \ts{Victoir}\cite{Victoir04}. 
\par If \A\ is a von Neumann algebra and $\phi$ a faithful tracial state on \A\ then its tensor product $\phi\tensor\phi$ is a faithful tracial state on the von Neumann tensor product $\A\tensor\A$ of $\A$ with itself, see for example \cite{Victoir04}, p. 109. So we can consider $(\A\tensor\A,\phi\tensor\phi)$ as a \NCP\ in its own right. We will discuss here the law of $Z(2\pi)$ with respect to this space.
\par We will also use the notation $\wh\A,\ \hphi$ for $\A\tensor\A$, $\phi\tensor\phi$ respectively.

\subsection{Using the L\' evy Representation}

The representation (\ref{Eq:Levy}) and a straightforward calculation using orthogonality of the trigonometric functions yield

\begin{prop}
	\label{Thm:LABB}
	The L\' evy area of the free Brownian bridge at time $2\pi$ is the random variable
	\begin{align}
		\label{Eq:BridgeArea}
 			Z(2\pi) &= \sum_{n=1}^\infty \inv{n}\left(\xi_n\otimes\eta_n - \eta_n\otimes\xi_n\right).
	\end{align}
\end{prop}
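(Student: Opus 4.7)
My plan is to compute the integral first for the truncated Lévy representations, where the path is $C^\infty$ and the integral reduces to a classical Bochner integral, and then pass to the limit.

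For each $N\in\N$, write
\[
\beta^{(N)}(t)=\sum_{n=1}^{N}\frac{\cos(nt)-1}{n\sqrt{\pi}}\,\xi_n+\sum_{n=1}^{N}\frac{\sin(nt)}{n\sqrt{\pi}}\,\eta_n,
\]
which is a $C^\infty$ path in $\A$, in particular of finite variation. By the Riemann-type construction of the integral in \cite{CDM_Levy}, the random variable
\[
Z^{(N)}(2\pi)=\int_0^{2\pi}\beta^{(N)}\tensor \d\beta^{(N)}
\]
coincides with the ordinary Bochner integral $\int_0^{2\pi}\beta^{(N)}(s)\tensor\dot\beta^{(N)}(s)\,\d s$, where $\dot\beta^{(N)}(s)=\pi^{-1/2}\sum_{n=1}^{N}\bigl(-\sin(ns)\xi_n+\cos(ns)\eta_n\bigr)$. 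Expanding the tensor product produces four families of double sums whose coefficients are the trigonometric integrals
\[
\int_0^{2\pi}(\cos(ms)-1)\sin(ns)\,\d s,\quad \int_0^{2\pi}(\cos(ms)-1)\cos(ns)\,\d s,\quad \int_0^{2\pi}\sin(ms)\sin(ns)\,\d s,\quad \int_0^{2\pi}\sin(ms)\cos(ns)\,\d s.
\]
The standard orthogonality relations on $[0,2\pi]$ kill every off-diagonal contribution as well as the entire $\xi_m\tensor\xi_n$ and $\eta_m\tensor\eta_n$ families, leaving only diagonal $\xi_n\tensor\eta_n$ and $\eta_n\tensor\xi_n$ terms with opposite signs. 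Bookkeeping the constants yields
\[
Z^{(N)}(2\pi)=\sum_{n=1}^{N}\frac{1}{n}\bigl(\xi_n\tensor\eta_n-\eta_n\tensor\xi_n\bigr).
\]

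It remains to pass to the limit $N\to\infty$. On the right-hand side the summands are pairwise orthogonal in $L^2(\wh\A,\hphi)$ with $L^2$-norm $\sqrt{2}/n$, so the series converges by $\sum n^{-2}<\infty$. On the left-hand side, the truncations $\beta^{(N)}$ converge to $\beta_{2\pi}$ uniformly in $t\in[0,2\pi]$ in the $L^2(\A,\phi)$-norm, and the continuity of the integral established in \cite{CDM_Levy} (the analogue of the Ito isometry for the free setting) transfers this into $Z^{(N)}(2\pi)\to Z(2\pi)$ in $L^2(\wh\A,\hphi)$. Equality of the two $L^2$-limits then gives (\ref{Eq:BridgeArea}). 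The only delicate point is this last step: the Lévy series does not converge absolutely in operator norm, so the limit must be taken in $L^2$ via the continuity estimates of \cite{CDM_Levy}, whereas the truncated trigonometric calculation itself is genuinely straightforward.
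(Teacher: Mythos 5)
Your computation is precisely the ``straightforward calculation using orthogonality of the trigonometric functions'' that the paper invokes without writing out, and both the diagonal bookkeeping (giving coefficients $\pm 1/n$) and the $L^2(\wh\A,\hphi)$ limit argument are correct. One small remark: the tail $\sum_{n>N}$ of the L\'evy series is itself a sum over a semicircular family, hence has operator norm of order $\bigl(\sum_{n>N}n^{-2}\bigr)^{1/2}$, so the truncations actually converge uniformly in operator norm and not merely in $L^2$ --- but your $L^2$ route via the continuity estimates of \cite{CDM_Levy} is equally valid.
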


\noindent In order to further analyse this series we need to know how the $\xi_m\otimes\eta_m$, $\eta_m\otimes\xi_m$ are correlated. The following technical lemma is in a slightly more general framework than we need here.

\begin{lem}
	\label{Thm:TensorFree}
	Let $\{a_n,b_n\colon n\in\N\}$ be a collection of free random variables in \A\ such that for each $n$ the variables $a_n,b_n$ are identically distributed. Then the following set is free in $\A\otimes\A$:
	\begin{align*}
		\{a_n\otimes b_n, b_m\otimes a_m\colon m,n\in\N\}.
	\end{align*}
	\begin{proof}
		Let $X_k\in A(\unit, \alpha_{j_k}\tensor\beta_{j_k})$ such that $j_k\ne j_{k+1}$ for $k\in\{1,\ldots,m-1\}$ and for each $k$ we have $\{\alpha_k,\beta_k\}=\{a_{j_k},b_{j_k}\}$. Suppose moreover that each $\hphi(X_k)=0$. We need to show that $\hphi(X_1\ldots X_m)=0$. Note that $X_k=p_k(\alpha_k\tensor\beta_k)$ for some polynomial $p_k$. Since addition and multiplication in the tensor product act componentwise
	\begin{align*}
		\hphi(X_1\ldots X_m) &= \hphi\left((p_1(\alpha_{j_1})\tensor p_1(\beta_{j_1}))\ldots (p_m(\alpha_{j_m})\tensor p_m(\beta_{j_m}))\right)\\
	& = \hphi\left[(p_1(\alpha_{j_1})\ldots p_m(\alpha_{j_m}))\tensor (p_1(\beta_{j_1})\ldots p_m(\beta_{j_m})) \right]\\
	& = \phi\left(p_1(\alpha_{j_1})\ldots p_m(\alpha_{j_m})\right)\ \phi\left(p_1(\beta_{j_1})\ldots p_m(\beta_{j_m})\right).
	\end{align*}
	By freeness of the $a_n,b_n$ and the fact that $j_k\ne j_{k+1}$ each of $\alpha_{j_1},\ldots,\alpha_{j_m}$ and $\beta_{j_1},\ldots \beta_{j_m}$ are free. Since $\hphi(X_k)=0$ we have $\phi(p_k(\alpha_{j_k}))\phi(p_k(\beta_{j_k}))=0$. So one of the factors must vanish. But since $\alpha_{j_k},\beta_{j_k}$ are identically distributed, either both or none of them are zero. So $\phi(p_k(\alpha_{j_k}))=\phi(p_k(\beta_{j_k}))=0$ for all $k$. Freeness now implies that the last line, and hence $\hphi(X_1\ldots X_m)=0$, vanishes.
	\end{proof}
\end{lem}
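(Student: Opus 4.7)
I would verify the defining property of freeness directly. Let $X_1 \cdots X_m$ be an alternating product where each $X_k$ lies in the subalgebra of $\wh\A$ generated by $\{a_{j_k} \otimes b_{j_k}, b_{j_k} \otimes a_{j_k}\}$, with $j_k \ne j_{k+1}$ and each $X_k$ satisfying $\wh\phi(X_k) = 0$; the goal is to show $\wh\phi(X_1 \cdots X_m) = 0$.

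The key structural observation is that every monomial in the generators has the form $W \otimes \tau_n(W)$, where $W$ is a word in $a_n, b_n$ and $\tau_n$ is the $\phi$-preserving $*$-automorphism of $\A$ that swaps $a_n \leftrightarrow b_n$; this automorphism is well-defined because $a_n$ and $b_n$ are identically distributed and the whole collection $\{a_n, b_n\}$ is free. Componentwise multiplication in the tensor product therefore gives $(W_1 \otimes \tau_{j_1}(W_1)) \cdots (W_m \otimes \tau_{j_m}(W_m)) = (W_1 \cdots W_m) \otimes \tau(W_1 \cdots W_m)$, where $\tau = \prod_n \tau_n$ is the global swap; since $\tau$ preserves $\phi$, applying $\wh\phi = \phi \otimes \phi$ produces a single squared $\phi$-moment $\phi(W_1 \cdots W_m)^2$ in each summand of the expansion of $\wh\phi(X_1 \cdots X_m)$.

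From here, by bilinearity I would reduce to a spanning family of centred elements --- centred monomials $M - \wh\phi(M) \cdot 1$ with $M = W \otimes \tau_n(W)$ --- and expand each $\phi(W_1 \cdots W_m)$ using the moment-cumulant formula for free subalgebras: only non-crossing partitions whose blocks lie inside a single $\A_{j_k} = \langle a_{j_k}, b_{j_k}\rangle$ contribute, because the subalgebras $\A_n$ are free in $\A$ (inherited from the freeness of $\{a_n, b_n\}$). Combining this squared expansion with the per-factor centring conditions should yield the required cancellation.

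The hardest part will be this final combinatorial step: because the $\wh\phi$-centring of each $X_k$ is a squared-moment condition $\sum_l \lambda_l \phi(W_l)^2 = 0$ rather than a linear one, it does not reduce directly to a single application of freeness in one tensor slot. An inductive argument on $m$, handling separately those configurations in which consecutive factors collapse into the same $\A_{j_k}$ via scalar substitutions of $\phi(W_k)$, should close the argument.
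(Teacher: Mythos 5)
You have put your finger on exactly the right difficulty, and in doing so you have been more careful than the paper itself: a general element of the unital algebra generated by $a_n\tensor b_n$ is a linear combination $X=\sum_l\lambda_l\,a_n^l\tensor b_n^l$, \emph{not} a simple tensor $p(a_n)\tensor p(b_n)$, so the centring condition $\hphi(X)=0$ is the quadratic constraint $\sum_l\lambda_l\,\phi(a_n^l)^2=0$ on the coefficients rather than a centring condition in either tensor slot. (Note also that by grouping $a_n\tensor b_n$ and $b_n\tensor a_n$ into a single algebra per index $n$ you are proving freeness of the family of grouped algebras, which says nothing about the same-$n$ pair being free from each other --- and that is what the factor $2$ in the subsequent Corollary requires.) Your proof, however, stops at precisely the decisive point: the ``inductive argument\ldots should close the argument'' is not carried out, and the entire burden of the lemma sits there.

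Moreover, no argument can close it: the hoped-for cancellation of the cross terms does not occur, and the lemma is false as stated, even in the centred semicircular case used in the application. Take $a,b$ free standard semicirculars, $u=a\tensor b$, $v=b\tensor a$, and the centred elements $X=u^2-\one$, $Y=v^2-\one$ of the respective singly generated algebras. Then
\begin{align*}
\hphi\left(u^2v^2u^2v^2\right)=\phi\left(a^2b^2a^2b^2\right)\,\phi\left(b^2a^2b^2a^2\right)=3\cdot 3=9,
\end{align*}
whereas freeness of $u$ and $v$ would force $\hphi(u^2v^2u^2v^2)=\hphi(u^4)\hphi(v^2)^2+\hphi(u^2)^2\hphi(v^4)-\hphi(u^2)^2\hphi(v^2)^2=4+4-1=7$; equivalently $\hphi(XYXY)=2\ne 0$. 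Writing $u^2-\one=(a^2-\one)\tensor(b^2-\one)+(a^2-\one)\tensor\one+\one\tensor(b^2-\one)$, it is exactly the two cross terms --- the ones invisible in the simple-tensor ansatz and the ones your quadratic centring condition fails to kill --- that contribute the $2$. The identical computation with $u=a_1\tensor b_1$, $v=a_2\tensor b_2$ (all four variables free standard semicirculars) gives the same $9$ versus $7$, so the grouped statement you set out to prove fails as well. Consequently the terms of (\ref{Eq:BridgeArea}) are not free, and the distribution of $Z(2\pi)$ cannot be obtained by additivity of the R-transform along this decomposition.
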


\noindent So the set $\{\xi_n\otimes\eta_n,\eta_n\otimes\xi_n\colon n\in\N\}$, and hence the terms of the right hand side of (\ref{Eq:BridgeArea}), are free. Since the R-transform is additive on free random variables we will use this tool to compute the distribution of $Z(2\pi)$ in $(\wh\A,\hphi)$. From Lemma~\ref{Thm:TensorFree} we can deduce

\begin{cor}
The R-transform of $Z(2\pi)$ is given by
\begin{align}
	\label{Eq:RSum}
		R_{Z(2\pi)} (z) & = 2\,\sum_{n=1}^\infty \frac{1}{n}\, R_{\xi\tensor\eta}\left(\frac{z}{n}\right).
\end{align}
\end{cor}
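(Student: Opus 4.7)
The plan is to derive the identity termwise from the representation (\ref{Eq:BridgeArea}), using the three properties of the $R$-transform collected in Remark~\ref{Rmk:PropRTf} together with Lemma~\ref{Thm:TensorFree}. Concretely, write
\begin{align*}
Z_N &= \sum_{n=1}^N \inv{n}\bigl(\xi_n\tensor\eta_n-\eta_n\tensor\xi_n\bigr),
\end{align*}
and aim to show that $R_{Z_N}(z)=2\sum_{n=1}^N\inv{n}R_{\xi\tensor\eta}(z/n)$, so that the claim follows by letting $N\to\infty$ and invoking continuity of the $R$-transform (Remark~\ref{Rmk:PropRTf}(\ref{En:RCont})) once the convergence $Z_N\to Z(2\pi)$ in $\wh\A$ is verified (this follows from orthogonality in the same manner as in the scalar case treated in Section~\ref{Sec:SqNorm}).

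For each fixed $n$, Lemma~\ref{Thm:TensorFree} (applied to $a_n=\xi_n$, $b_n=\eta_n$) implies that $\xi_n\tensor\eta_n$ and $\eta_n\tensor\xi_n$ are free, and that the pairs indexed by distinct $n$ are jointly free. Additivity (Remark~\ref{Rmk:PropRTf}(\ref{En:Radd})) therefore gives both the splitting within each term and the additivity across terms, so it suffices to identify
\begin{align*}
R_{\xi_n\tensor\eta_n-\eta_n\tensor\xi_n}(z) &= R_{\xi_n\tensor\eta_n}(z)+R_{-\eta_n\tensor\xi_n}(z).
\end{align*}
Next I would argue that each of these two summands equals $R_{\xi\tensor\eta}(z)$. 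First, the flip automorphism $a\tensor b\mapsto b\tensor a$ preserves $\hphi$, so $\eta_n\tensor\xi_n$ and $\xi_n\tensor\eta_n$ are identically distributed; since $\xi_n,\eta_n$ have the same law as a generic pair $\xi,\eta$ of free standard semicirculars, both have the same $R$-transform as $\xi\tensor\eta$. Second, because the standard semicircle law is symmetric, $(-\xi_n)\tensor\eta_n$ has the same moments as $\xi_n\tensor\eta_n$, so the distribution of $\xi\tensor\eta$ is symmetric about the origin; equivalently (by Remark~\ref{Rmk:PropRTf}(\ref{En:RScale}) with $\lambda=-1$) the $R$-transform $R_{\xi\tensor\eta}$ is an odd function, which yields $R_{-\eta_n\tensor\xi_n}(z)=-R_{\xi\tensor\eta}(-z)=R_{\xi\tensor\eta}(z)$. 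Combining, $R_{\xi_n\tensor\eta_n-\eta_n\tensor\xi_n}(z)=2R_{\xi\tensor\eta}(z)$.

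Finally, the scaling property (Remark~\ref{Rmk:PropRTf}(\ref{En:RScale})) applied to the factor $1/n$ converts this into
\begin{align*}
R_{\frac{1}{n}(\xi_n\tensor\eta_n-\eta_n\tensor\xi_n)}(z) &= \inv{n}\cdot 2R_{\xi\tensor\eta}\!\left(\frac{z}{n}\right),
\end{align*}
and summing over $n\leq N$ gives (\ref{Eq:RSum}) at the level of $Z_N$; passing $N\to\infty$ with Remark~\ref{Rmk:PropRTf}(\ref{En:RCont}) concludes the argument. I expect the main technical subtlety to be the interchange of the $R$-transform with the infinite sum, which is handled by the continuity property once the norm convergence $Z_N\to Z(2\pi)$ in $\wh\A$ is established; every other step is a mechanical application of the three properties of $R$ together with Lemma~\ref{Thm:TensorFree}.
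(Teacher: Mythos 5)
Your argument is correct and follows exactly the route the paper intends: the corollary is stated as a consequence of Lemma~\ref{Thm:TensorFree}, with the freeness of the terms giving additivity of $R$, the scaling property handling the factor $1/n$, and the symmetry of the law of $\xi\tensor\eta$ (odd moments of a standard semicircular vanish) giving $R_{-\eta\tensor\xi}=R_{\xi\tensor\eta}$. You also correctly flag the only genuine subtlety, namely passing to the limit $N\to\infty$ via continuity of the $R$-transform, which the paper likewise treats as part of the series representation in Proposition~\ref{Thm:LABB}.
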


\begin{rmk}
	By the definition of $\hphi$ we have $\hphi\left((\xi\tensor\eta)^k\right)=\phi(\xi^k)^2$ for $k\in\N$.
Recall that $R_a(z)=\sum_{m=1}^\infty k_{m+1}(a) z^m$ where $k_m(a)$ denotes the $m^{\text{th}}$ cumulant of $a$. In particular $k_1(\xi\tensor\eta)=\phi(\xi)^2=0$ so that (on a neighbourhood of zero) $R_{\xi\tensor\eta}(z)=zP(z)$ for some $P\in\C[z]$. Rewriting (\ref{Eq:RSum}) yields
\begin{align}
	R_{Z(2\pi)}(z)& = 2\, \sum_{n=1}^\infty \inv{n^2}\, P\left(\frac{z}{n}\right),
\end{align}
in particular the right hand side of (\ref{Eq:RSum}) converges in a neighbourhood of zero.
\end{rmk}

\subsection{The Distribution of $\xi\tensor\eta$ and Meanders}

\noindent We proceed to compute the R-transform of $\zeta:=\xi\tensor\eta$ with $\xi,\eta$ free standard semicirculars. Recall that the odd moments of $\xi$ vanish and that $\phi(\xi^{2n})$ is given by the $n^\text{th}$ \textit{Catalan number}
\begin{align}
	\phi(\xi^{2n}) &= C_n : = \inv{2n+1}\begin{pmatrix} 2n\\ n\end{pmatrix}.
	\intertext{Since $\xi$, $\eta$ are self-adjoint, so is $\zeta$. Hence its law is a probability measure $\nu$ with compact support in \R. In particular $\nu$ is determined by its moments which are given by}
	\label{MomentsMu}
	\int t^m\nu(\d t) = \phi\left((\xi\tensor\eta)^m\right) &= \phi(\xi^m)\phi(\eta^m) = \begin{cases}
				\left(C_k\right)^2\quad & \text{if } m=2k\\ 0 &\text{ if } m \text{ is odd} \end{cases}
\end{align}
 
\noindent i.e. $\nu$ is the law of $\zeta_1\zeta_2$ where the $\zeta_i$ are independent commutative random variables with standard semicircular distribution. Therefore $\nu$ is absolutely continuous with respect to Lebesgue measure with density $\phi$ given by	
\begin{align}
		\label{DensityMu}
		\phi(t)& = \inv{4\pi^2}\int_{-\abs{t}/2}^{\abs{t}/2} \sqrt{4-x^2}\, \sqrt{4x^2-t^2}\, \d x \ \ 1_{[-2,2]}(t).
	\end{align}

\par\noindent The Catalan numbers $C_n$ are well-known in combinatorics. They give, for example, the number of Dyck paths of length $2n$. Similarly there is a combinatorial interpretation of the squares of the Catalan numbers, as detailed in \ts{Lando}--\ts{Zwonkin}\cite{LandoZwonkin93} and \ts{Di Francesco}--\ts{Golinelli}--\ts{Guitter}\cite{DGG_Meander}: consider an infinite line in the plane and call it the \textit{river}. A \textit{meander} of order $n$ is a closed self-avoiding connected loop intersecting the line through $2n$ points (the \textit{bridges}). Two meanders are said to be \textit{equivalent} if they can be deformed into each other by a smooth transformation without changing the order of the bridges. If a meander of order $n$ consists of $k$ closed connected non-intersecting (but possibly interlocking) loops it is said to have $k$ \textit{components}.\\

\begin{figure}[H]
	\begin{center}
	\includegraphics[scale = .19]{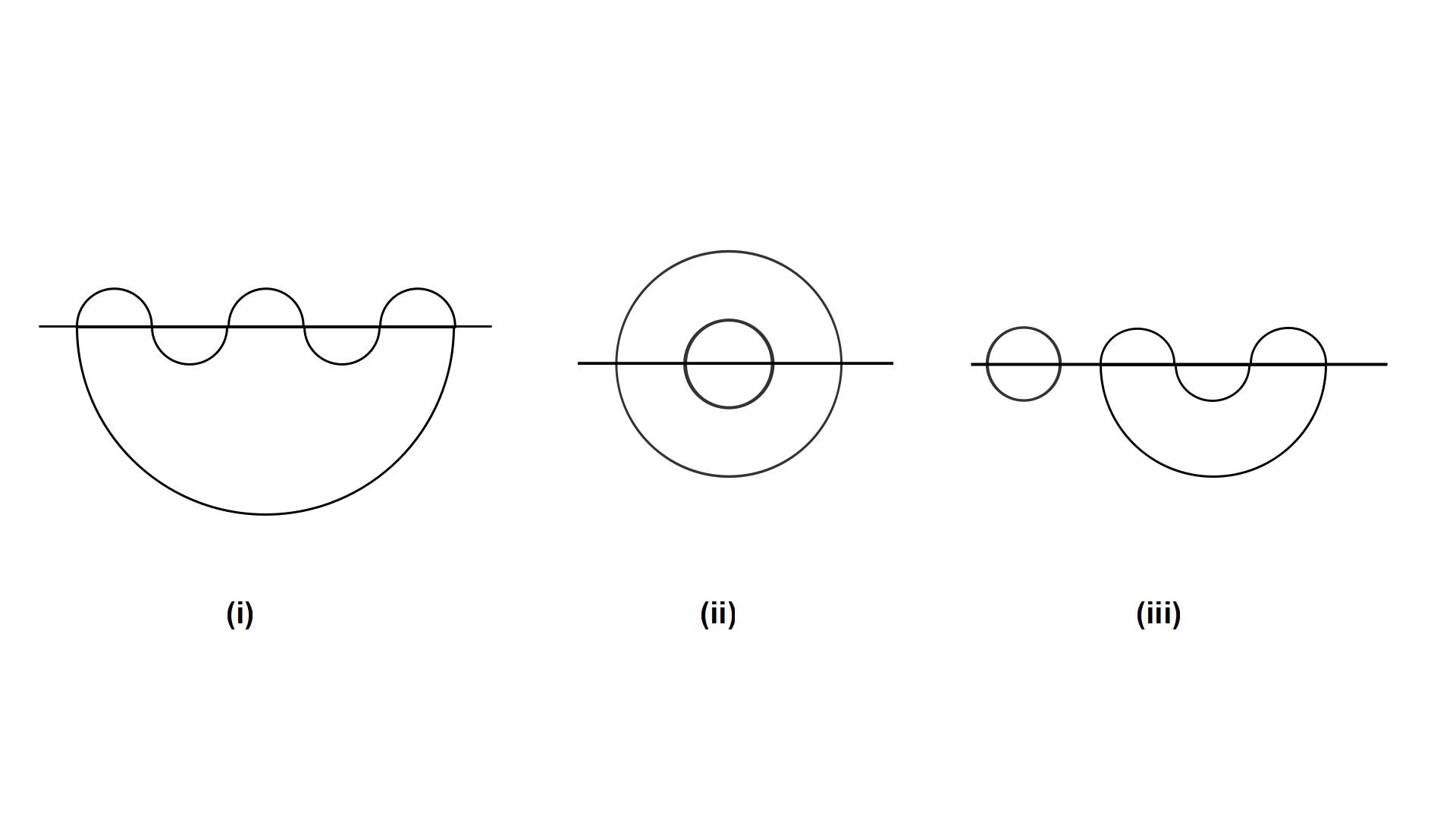}
	\parbox{10cm}{\caption{\textbf{(i)} 1-component meander of order 3; \textbf{(ii)} order 2, 2 components; \textbf{(ii)} order 3, 2 components}}
\end{center}

\end{figure}

\par\noindent A multi-component meander is said to be \textit{$k$-reducible} if a proper non-trivial collection of its connected components can be detached from the meander by cutting the river $k$ times between the bridges. Otherwise the meander is said to be \textit{$k$-irreducible}.
\begin{figure}[H]
	\begin{center}
	\includegraphics[scale = .19]{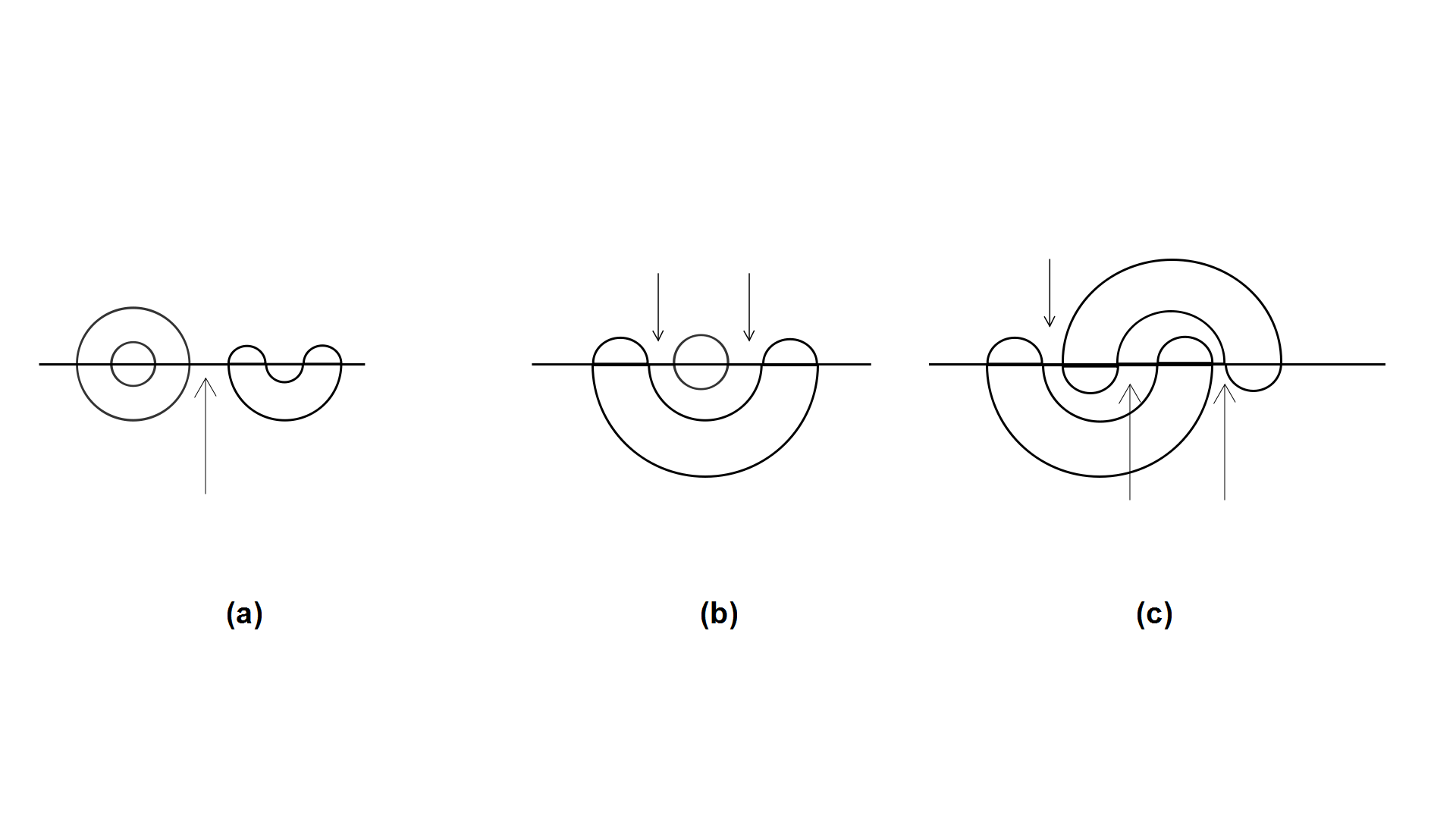}
	\parbox{11cm}{\caption{meanders that are \textbf{(a)} 1-reducible but 2-irreducible; \textbf{(b)} 1- and 2-reducible but 3-irreducible \textbf{(c)} 3-reducible}}
\end{center}

\end{figure}

\par\noindent The 2-irreducible meanders have been studied extensively in \cite{LandoZwonkin93} (where they are called irreducible meanders). Denote the generating series of the $q_m$ by $\Q$. Our connection to these objects is the following

\begin{prop}
	Let $q_n$ denote the number of 2-irreducible meanders of order $2n$ and $k_n=k_n(\xi\otimes\eta)$ the $n^\text{th}$ cumulant of $\xi\tensor\eta$. Then
	\begin{align}
		k_n(\xi\otimes\eta) &= \begin{cases}
					q_m  \quad&\text{if } n=2m\\
					0 &\text{if } n\text{ is odd}
				\end{cases}
	\end{align}
	\begin{proof}
		We first prove by induction that $k_n=0$ if $n$ is odd, which will follow from the fact that $\hphi((\xi\tensor\eta)^n)=0$ for $n$ odd. Assume that $k_m=0$ whenever $m<n$ is odd.  From (\ref{Eq:DefCum}) it follows that
		\begin{align}
			\notag			
			k_n & = \sum_{\substack{ \pi\in\NC(n)\\ \pi\ne \unit}} k_\pi
		\intertext{where $k_\pi=k_{V_1}\ldots k_{V_r}$ if $V_1,\ldots,V_r$ are the equivalence classes of $\pi$ and $\unit$ denotes the identity partition, i.e. $[k]_\unit=\ul{n}$. Every $\pi\in\NC(n)\setminus\{\unit\}$ must contain at least one equivalence class of size $m$ for some odd integer $m<n$. Since $k_m$ is a factor of $k_\pi$ and $k_m=0$, the inductive hypothesis implies $k_n=0$ as required. Hence}
	\notag
	R_{\xi\tensor\eta}(z) & = \sum_{n=1}^\infty k_{2n} z^{2n-1}.
	\intertext{Define the \textit{moment series} of $\xi\tensor\eta$ by}
	\notag	
	M(z)& = \inv{z}G\left(\inv{z}\right) = 1+\sum_{n=1}^\infty \hphi\left((\xi\tensor\eta)^n\right)\, z^n.
	\intertext{It is a consequence of the relationship between Cauchy and R-transform that}
	\label{DetR}
	M(z)&=1+zM(z)\, R(zM(z)).
	\intertext{We will introduce one more generating series. Put}
	\notag
	\rho(z)& = \sum_{n=1}^\infty q_n z^{2n-1}
	\intertext{so that $\Q(z) = 1+z\rho(z)$. From (7.10) in \cite{DGG_Meander} we have}
	\label{MRho}
	M(z) & = q(z M(z)) = 1+z M(z)\, \rho(zM(z)).
\end{align}
	Combining (\ref{DetR}) and (\ref{MRho}) yields $\rho=R$ as power series. That $k_{2n}=q_n$ now follows from comparing coefficients.
	\end{proof}
\end{prop}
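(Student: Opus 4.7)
The plan is to handle the two cases separately. First I would show the odd cumulants vanish by induction on $n$, using the moment-cumulant relation
\[
\hphi\left((\xi\tensor\eta)^n\right) \;=\; \sum_{\pi\in\NC(n)} k_\pi.
\]
The left-hand side equals $\phi(\xi^n)\phi(\eta^n)$, which is zero for odd $n$. The top partition contributes $k_n$; every other $\pi \in \NC(n)$ has blocks whose sizes sum to the odd number $n$, so at least one block has odd size strictly less than $n$, and by the inductive hypothesis the corresponding factor of $k_\pi$ vanishes. Hence $k_n = 0$ for odd $n$, and the R-transform reduces to $R(z) = \sum_{n\geq 1} k_{2n} z^{2n-1}$.

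For the even case I would compare two functional equations for the moment series $M(z) = 1 + \sum_{n\geq 1} \hphi((\xi\tensor\eta)^n)z^n$. On the free-probability side, the relation $K(w) = R(w) + 1/w$ between the compositional inverse of the Cauchy transform and the R-transform translates into the self-consistency equation
\[
M(z) \;=\; 1 + zM(z)\,R\!\left(zM(z)\right).
\]
On the combinatorial side, $\hphi((\xi\tensor\eta)^{2n}) = C_n^2$ equals the number of meanders of order $n$, so $M$ coincides with the total meander generating series. Equation (7.10) of Di Francesco--Golinelli--Guitter then gives the analogous identity
\[
M(z) \;=\; 1 + zM(z)\,\rho\!\left(zM(z)\right),
\]
where $\rho(z) = \sum_{n\geq 1} q_n z^{2n-1}$ is the generating series of $2$-irreducible meanders. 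Since $zM(z)$ is a formal power series with non-zero derivative at the origin and hence admits a compositional inverse, equating the two right-hand sides forces $R = \rho$ as formal power series. Reading off coefficients gives $k_{2n} = q_n$.

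The main obstacle I expect is the careful matching of normalisations in the DGG formula: one must verify that their ``meander'' generating series indexes bridges in the same way that our moment series indexes powers of $\xi\tensor\eta$, and that their decomposition of a meander into a 2-irreducible ``skeleton'' with hanging submeanders is exactly the combinatorial counterpart of the cumulant-moment identity through non-crossing partitions. Once this dictionary is pinned down, the two functional equations line up term by term and the identification $R = \rho$ is forced by uniqueness of compositional inverses.
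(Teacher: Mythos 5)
Your proposal is correct and follows essentially the same route as the paper: induction via the moment--cumulant relation for the vanishing of odd cumulants, then comparison of the free-probability functional equation $M(z)=1+zM(z)R(zM(z))$ with the Di Francesco--Golinelli--Guitter identity $M(z)=1+zM(z)\rho(zM(z))$ to conclude $R=\rho$ coefficientwise. Your added remark justifying why the two equations force $R=\rho$ (invertibility of the formal power series $zM(z)$) is a useful detail the paper leaves implicit.
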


\subsection{The Distribution of $Z(2\pi)$}

\par\noindent So we have an explicit expression for the R-transform of $\xi\tensor\eta$. We will use this to obtain the R-transform of $Z(2\pi)$. 
\par Recall that all odd cumulants of $\xi_n\otimes\eta_n$ and $\eta_n\otimes\xi_n$ vanish, hence the same is true of $Z(2\pi)$.

\begin{prop}
	\label{Thm:CumZ}
	The $2n^\text{th}$ cumulant of $Z(2\pi)$ is $2\, \zeta(2n)\,q_n$ where $\zeta$ is the Riemann zeta function.
\begin{proof}
	Recall that $\zeta(m)=\sum_{n=1}^\infty n^{-m}$. So
	\begin{align*}
		R_{Z(2\pi)}(z) &= 2\sum_{n=1}^\infty \inv{n}\, R_{\xi\tensor\eta}\left(\frac{z}{n}\right)
			  = 2\sum_{n=1}^\infty \inv{n} \sum_{m=1}^\infty k_m\left(\frac{z}{n}\right)^{m-1} \\ &= 2\sum_{n=1}^\infty \sum_{m=1}^\infty n^{-2m} q_m z^{2m-1} \\
& = \sum_{m=1}^\infty 2\,\zeta(2m) q_m z^{2m-1}
	\end{align*}
	where interchanging the sums over $m$ and $n$ is justified by absolute convergence.
\end{proof}
\end{prop}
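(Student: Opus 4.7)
The plan is to substitute the known expansion of $R_{\xi\tensor\eta}$ into the already-established identity $R_{Z(2\pi)}(z) = 2\sum_{n=1}^\infty \tfrac{1}{n} R_{\xi\tensor\eta}(z/n)$, then interchange the two sums, recognise the inner geometric-type sum as a value of $\zeta$ at an even integer, and finally read off the cumulants by comparing coefficients in the definition $R_{Z(2\pi)}(z)=\sum_{m\geq 0} k_{m+1}(Z(2\pi))\,z^m$.

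Concretely, I would first recall from the previous proposition that $k_m(\xi\tensor\eta)=0$ when $m$ is odd and $k_{2m}(\xi\tensor\eta)=q_m$, so that
\begin{align*}
R_{\xi\tensor\eta}(w) & = \sum_{m=1}^\infty q_m\, w^{2m-1}.
\end{align*}
Substituting $w=z/n$ and plugging into the formula for $R_{Z(2\pi)}$ gives
\begin{align*}
R_{Z(2\pi)}(z) & = 2\sum_{n=1}^\infty \inv{n}\sum_{m=1}^\infty q_m \left(\frac{z}{n}\right)^{2m-1} = 2\sum_{n=1}^\infty \sum_{m=1}^\infty n^{-2m}\, q_m\, z^{2m-1}.
\end{align*}
The next step is to swap the two summations. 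This is the only real technical point: one needs to observe that for $|z|$ small enough the double series converges absolutely, which follows from the fact that $R_{\xi\tensor\eta}$ is analytic on a neighbourhood of $0$ (hence $q_m$ grows at most exponentially in $m$) combined with $\sum_n n^{-2m}\leq \sum_n n^{-2}<\infty$ uniformly in $m\geq 1$. After the interchange the inner sum collapses to $\sum_n n^{-2m}=\zeta(2m)$, yielding
\begin{align*}
R_{Z(2\pi)}(z) & = \sum_{m=1}^\infty 2\,\zeta(2m)\, q_m\, z^{2m-1}.
\end{align*}

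Finally, comparing this with the definition $R_{Z(2\pi)}(z)=\sum_{k\geq 1} k_k(Z(2\pi))\,z^{k-1}$ yields immediately $k_{2n}(Z(2\pi))=2\,\zeta(2n)\,q_n$ and $k_{2n-1}(Z(2\pi))=0$, the latter recovering the vanishing of odd cumulants that was already noted. The main obstacle, if any, is the Fubini-type justification of swapping the two sums, but the absolute convergence argument sketched above handles it without difficulty.
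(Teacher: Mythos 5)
Your proposal is correct and follows essentially the same route as the paper: substitute the expansion of $R_{\xi\tensor\eta}$ (with $k_{2m}=q_m$ and vanishing odd cumulants) into $R_{Z(2\pi)}(z)=2\sum_n \frac{1}{n}R_{\xi\tensor\eta}(z/n)$, interchange the sums by absolute convergence, and identify $\sum_n n^{-2m}=\zeta(2m)$. Your added remark justifying the interchange via the exponential growth bound on $q_m$ is a slightly more explicit version of the paper's one-line appeal to absolute convergence.
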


\begin{defn}[see \cite{Stanley1}, p. 107]
	Let $(a_n)_{n\in\N}$, $(b_n)_{n\in\N}$ be two sequences with generating functions $f,g$ respectively. The \textit{Hadamard product} of $f, g$ is defined to be the generating function of $(a_nb_n)$, denoted $f\boxast g$. That is
\begin{align*}
	f\boxast g(z)& = \sum_{n=1}^\infty a_n b_n z^n.
\end{align*}
\end{defn}

\par\noindent So $R_{Z(2\pi)}$ is twice the Haddamard product of the generating functions of the 2-irreducible meanders and that of the sequence $\{\zeta(2m)\colon m\in\N\}$.
\par From (6.3.14) in \ts{Abramowitz--Stegun} \cite{HandMathFn} we have for $\abs{z}<1$,
	\begin{align*}
	\label{Eq:ZetaPsi}
\sum_{n=2}^\infty \zeta(n+1) z^{n} &  = -\gamma-\Psi(1-z)
\intertext{where $\gamma$ is the Euler constant and $\Psi$ is the \textit{Digamma function} defined by}
	\notag
		\Psi(z) & = \frac{\d}{\d z}\log\Gamma(z) = \frac{\Gamma'(z)}{\Gamma(z)}.
	\end{align*}

\noindent Since the generating series can be considered as functions inside their radius of convergence, we can use complex analysis to compute their Hadamard product. Namely

\begin{lem}
	\label{Thm:Had}
	 Let $f,g$ be generating functions of  $(a_n)_{n\in\N}$, $(b_n)_{n\in\N}$ and suppose that they are analytic on a neighbourhood of 0. Then 
	\begin{align}
	(f\boxast g)(z^2) & = \inv{2\pi i}\int_\gamma f(zw)\, g\left(\frac{z}{w}\right)\, \frac{\d w}{w}
\end{align}
\noindent on a neighbourhood $U$ of 0, where $\gamma$ is a smooth closed curve around 0 and contained in $U$.
\begin{proof}
	Let $U_1,U_2$ be neighbourhoods of 0 on which $f$ and $g$ respectively are analytic. Then for $z\in U=U_1\cap U_2$,
	\begin{align*}
		\inv{2\pi i} \int_\gamma f(zw)\, g\left(\frac{z}{w}\right)\, \frac{\d w}{w} & = \left[f(z\eta)\, g\left(\frac{z}{\eta}\right)\right]_{\eta^0}\\
	& = \left[\sum_{n=0}^\infty a_n (z\eta)^n\sum_{m=0}^\infty b_m \left(\frac{z}{w}\right)^m\right]_{\eta^0}\\
	& = \left[\sum_{m,n} a_n b_m z^{n+m}\eta^{n-m}\right]_{\eta^0}\\
	& = \sum_{n=0}^\infty a_n b_n z^{2n} = f\boxast g(z^2)
	\end{align*}
	\noindent where $[\cdot]_{\eta^0}$ denotes the constant term in a Laurent series in $\eta$.
\end{proof}	
\end{lem}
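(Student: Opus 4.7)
The plan is to extract the diagonal of the double power series obtained from multiplying $f(zw)$ and $g(z/w)$, exploiting the classical residue fact that $\frac{1}{2\pi i}\int_\gamma w^{k-1}\,dw$ equals $1$ when $k=0$ and vanishes otherwise. First I would fix radii $r_1, r_2 > 0$ with $f$ analytic on $\{|\zeta|<r_1\}$ and $g$ analytic on $\{|\zeta|<r_2\}$. Then I would choose a circle $\gamma = \{|w|=\rho\}$ and a neighbourhood $U$ of $0$ small enough that $|z|\rho < r_1$ and $|z|/\rho < r_2$ for all $z \in U$; this ensures $f(zw)$ and $g(z/w)$ are well-defined on $U \times \gamma$, and by compactness their power series in the respective variables converge uniformly.

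Next I would substitute the power series directly:
\begin{align*}
f(zw)\,g\!\left(\frac{z}{w}\right)\frac{1}{w} &= \sum_{n=0}^\infty \sum_{m=0}^\infty a_n b_m\, z^{n+m}\, w^{n-m-1}.
\end{align*}
Uniform convergence on $\gamma$ for each fixed $z \in U$ justifies interchanging the sum and the contour integral, giving
\begin{align*}
\frac{1}{2\pi i}\int_\gamma f(zw)\,g\!\left(\frac{z}{w}\right)\frac{dw}{w} &= \sum_{n,m\geq 0} a_n b_m\, z^{n+m}\cdot \frac{1}{2\pi i}\int_\gamma w^{n-m-1}\,dw.
\end{align*}
The inner integral is $\delta_{n,m}$, collapsing the double sum to $\sum_{n\geq 0} a_n b_n z^{2n} = (f\boxast g)(z^2)$, which is the claim.

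The only genuine obstacle is justifying the term-by-term integration, i.e.\ the interchange of summation and integration. The cleanest route is to observe that since both series converge absolutely on a product of closed disks strictly inside their domains of analyticity, and since $\gamma$ is compact, the double series $\sum_{n,m} a_n b_m z^{n+m} w^{n-m-1}$ converges uniformly on $\gamma$ for each fixed $z$ with $|z|$ small enough. This uniform convergence, together with the fact that $\gamma$ has finite length, legitimises exchanging the sum with the integral via dominated convergence (or directly by uniform convergence of analytic functions). Everything else is a bookkeeping identification of the surviving diagonal terms.
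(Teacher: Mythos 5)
Your proposal is correct and follows essentially the same route as the paper: expand both series, multiply, and use the residue identity $\frac{1}{2\pi i}\int_\gamma w^{k-1}\,\d w=\delta_{k,0}$ to extract the diagonal terms. You are in fact somewhat more careful than the paper about choosing the contour radius and the neighbourhood $U$ so that both compositions are analytic and the term-by-term integration is justified, but this is added rigour within the same argument rather than a different method.
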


\begin{cor}
	 Let $\epsilon\in(0,\rho)$ where $\rho$ is the radius of convergence of $R_{Z(2\pi)}$ and choose the canonical branch of the square root on $B(0,\rho)$. Then for $z\in B(0,\rho)$
\begin{align}
	R_{Z(2\pi)}(z)&= -\frac{z^{1/2}}{\pi i}\int_\Gamma \Psi(1-z^{1/2}w)\,\Q\left(\frac{z}{w}\right)\,\d w
\end{align}
\noindent where $\Gamma=\partial B(0,\rho)$.
\begin{proof}
	By Proposition~\ref{Thm:CumZ} we have $R_{Z(2\pi)} = 2\, \Q\boxast\Lambda$ where, using (\ref{Eq:ZetaPsi})
	\begin{align*}
	\Lambda(z)&=\sum_{n=1}^\infty \zeta(m)z^m=-z\Psi(1-z)-\gamma z.
	\intertext{Lemma~\ref{Thm:Had} now yields}
	 (\Q\boxast\Lambda)(z^2)&= \inv{2\pi i}\int_\Gamma \Lambda(z w)\,\Q\left(\frac{z}{w}\right)\, \frac{\d w}{w}\\
			& =-\inv{2\pi i}\int_\Gamma zw\left(\Psi(1-zw) +\gamma\right)\,\Q\left(\frac{z}{w}\right)\, \frac{\d w}{w}\\
			& = -\inv{2\pi i}\int_\Gamma z\Psi(1-zw)\,\Q \left(\frac{z}{w}\right)\,\d w\\
		& \quad-\frac{\gamma\, z}{2\pi i}\int_\Gamma \Q\left(\frac{z}{w}\right)\,\d w.
	\intertext{The argument of the integral in the second summand has a power series with only even powers of $w$ so the integral itself must vanish. We therefore have}	
	(q\boxast\Phi)(z^2) & = \frac{z}{2\pi i}\int_\Gamma \Psi(1-zw)\,q\left(\frac{z}{w}\right)\,\d w
\end{align*}
\end{proof}
\end{cor}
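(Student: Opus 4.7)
The plan is to follow the strategy outlined in the paragraphs preceding the corollary: express $R_{Z(2\pi)}$ as a Hadamard product, convert that product into a contour integral via Lemma~\ref{Thm:Had}, and then replace the generating series of $\{\zeta(m)\}_{m\geq 2}$ by its closed form in terms of the Digamma function. Concretely, Proposition~\ref{Thm:CumZ} gives
\begin{align*}
R_{Z(2\pi)}(z) = 2\sum_{n=1}^\infty \zeta(2n)\, q_n\, z^{2n-1},
\end{align*}
valid on $B(0,\rho)$. Writing $\Lambda(z) = \sum_{m\geq 2} \zeta(m) z^m$ and using that the generating series $\Q$ of the 2-irreducible meanders is supported on even powers, the Hadamard product $\Q\boxast\Lambda$ selects exactly the coefficients $\zeta(2n) q_n$ at the powers $z^{2n}$, so that $R_{Z(2\pi)}$ agrees, up to an overall factor $2/z$, with $\Q\boxast\Lambda$.

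Next I would apply Lemma~\ref{Thm:Had} to the pair $(\Q,\Lambda)$ and evaluate the Hadamard product as a contour integral over $\Gamma = \partial B(0,\rho)$, which lies inside the common region of analyticity of $\Q$ and $\Lambda$. Invoking the identity $\Lambda(w) = -w\Psi(1-w) - \gamma w$ from (6.3.14) of \cite{HandMathFn}, the integrand then factors into a $\Psi$-piece and a $\gamma$-piece.

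The $\gamma$-piece is proportional to $\int_\Gamma \Q(z/w)\,\d w$ and vanishes: because $\Q$ is supported on even powers, $\Q(z/w) = 1 + \sum_{n\geq 1} q_n z^{2n} w^{-2n}$ is a Laurent series in $w$ containing only the constant term and \emph{even} negative powers, so its $w^{-1}$ coefficient is zero and the residue theorem gives the claim. What remains is the $\Psi$-contribution. Combining it with the factor $2/z$ converting $\Q\boxast\Lambda$ into $R_{Z(2\pi)}$, and performing the substitution $z\mapsto z^{1/2}$ under the canonical branch of the square root on $B(0,\rho)$, yields the advertised formula.

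The one genuinely delicate point is verifying that the contour $\Gamma = \partial B(0,\rho)$ can be chosen so that both $\Psi(1-z^{1/2}w)$ and $\Q(z/w)$ are analytic in a common neighbourhood of $\Gamma$ for all $z\in B(0,\rho)$. Since $\rho$ is the radius of convergence of $R_{Z(2\pi)}$, the series defining $\Q$ and $\Lambda$ converge there, and a routine check of the moduli $|z^{1/2}w|$ and $|z/w|$ for $w\in\Gamma$ places the integrand inside the joint disc of analyticity. I expect no deeper obstruction; the main technical care is simply in the bookkeeping of the substitution $z \leftrightarrow z^{1/2}$ and in tracking the overall sign and normalisation through the splitting of the $\Psi$- and $\gamma$-pieces.
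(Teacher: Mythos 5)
Your proposal follows essentially the same route as the paper's own proof: express $R_{Z(2\pi)}$ via the Hadamard product $\Q\boxast\Lambda$ from Proposition~\ref{Thm:CumZ}, convert it to a contour integral with Lemma~\ref{Thm:Had}, substitute the digamma identity for $\Lambda$, and kill the $\gamma$-term by the parity of the powers of $w$ (you are in fact slightly more careful than the paper about the $2/z$ normalisation and the $z\mapsto z^{1/2}$ substitution). This is correct and matches the paper's argument.
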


\begin{rmk}
	In\cite{DGG_Meander} it has been shown that the radius of convergence of $\Q$ is $\frac{4}{\pi}-1$. Since $\zeta(m)\longrightarrow 1$ as $m\longrightarrow\infty$, it follows that the radius of convergence of $R_{Z(2\pi)}$ is also $\frac{4}{\pi}-1$.
\end{rmk}

\par\noindent It is well-known, see \cite{HiaiPetz}, that the semicircular distribution is \FID. By Lemma~\ref{Thm:TensorFree} it follows that the $\xi_n\otimes\eta_n$ and $\eta_n\otimes\xi_n$ are \FID. Since free infinite divisibility is preserved by free linear combinations and weak limits, it follows that $Z(2\pi)$ is also \FID.
\par Unfortunately it seems that there is no explicit formula for \Q. It is therefore not apparent how a similar analysis to that for the square norm could be applied in order to obtain further details about the distribution of $Z(2\pi)$.


\section{L\'evy Area of the Free Brownian Bridge}

\label{Sec:FLA}

In this section we use the L\'evy representation

\begin{align}
	\label{Eq:LevyRep}
	\beta(t)  = \sum_{n=1}^\infty \frac{\cos(nt)-1}{n\sqrt\pi}\,\xi_n &+ \sum_{n=1}^\infty\frac{\sin(nt)}{n\sqrt\pi}\,\eta_n
\intertext{of the free Brownian bridge to compute the distribution of the free analogue of the classical L\'evy area process defined by}
	\label{Eq:DefCLA}
	\l(t) = \frac{i}{2} \int_0^t \left[\beta(s),\d\beta(s)\right] & =\frac{i}{2}\int_0^t\left(\beta(s)\d\beta(s) - \d\beta(s)\beta(s)\right).
	\intertext{When $\beta$ is a two-dimensional commutative Brownian motion this is very similar to the object studied by \ts{L\'evy}\cite{Levy50}. By standard properties of the non-commutative integral \cite{BianeSpeicher98} and self-adjointness of $\beta$ we have}
	\notag
		\int_0^t\beta(s)\d\beta(s) & = \left(\int_0^t\d\beta(s)\beta(s)\right)^*.
	\intertext{A straightforward calculation yields that the left hand side equals, for $t=2\pi$,}
	\label{Eq:CLASum}
	\int_0^{2\pi}\beta(s)\d\beta(s) & = \sum_{n=1}^\infty \inv{n}\left(\xi_n\eta_n-\eta_n\xi_n\right)
\end{align}
which is easily seen to be anti-self-adjoint. This is the reason for the factor of $i$ in (\ref{Eq:DefCLA}): multiplying an anti-self-adjoint operator by $i$ yields a self-adjoint random variable whose distribution is therefore supported in \R. Thus $\l:=\l(2\pi)$ is equal to either side of (\ref{Eq:CLASum}) multiplied by $i$. 
\par The summands are \textit{commutators} of free semicircular random variables. Commutators have been studied by \ts{Nica--Speicher}\cite{NicaSpeicher98}, where the semicircle distribution is discussed in Example~1.5(2). If $c_n=i\left(\xi_n\eta_n-\eta_n\xi_n\right)$, then the support of $\mu_{c_n}$ is $[-r,r]$ where $r=\sqrt{\frac{11+5\sqrt{5}}{2}}$ and
\begin{align}
	\label{Eq:RCn}
	R_{c_n}(z) &= \frac{2z}{1-z^2} = 2\sum_{m=1}^\infty z^{2m-1}.
\end{align}
From this we can now compute the R-transform of the classical L\'evy area. Let that function be denoted $R_{\l}$ then
\begin{align}
	\notag
	R_{\l} & = \sum_{n=1}^\infty \inv{n} R_{c_n}\left(\frac{z}{n}\right) = \sum_{n=1}^\infty \frac{2n}{n^2-z^2}\\
	\label{Eq:RLevyArea}
	 & = \inv{z} - \pi\cot(\pi z).
\end{align}
We can deduce the free cumulants of $\l$, either from the Taylor series of (\ref{Eq:RLevyArea}) or by calculating
\begin{align}
	\notag
	R_{\l} & = \sum_{n=1}^\infty \frac{2}{n} \sum_{m=1}^\infty \left(\frac{z}{n}\right)^{2m-1} = \sum_{m=1}^\infty 2 \left(\sum_{n=1}^\infty n^{-2m}\right)\,z^{2m-1}\\
	\notag
	& = \sum_{m=1} 2\zeta(2m) z^{2m-1}
\end{align}
where the interchanging of the infinite sums is justified by absolute convergence. The free cumulants of \l\ are therefore given by
\begin{align}
k_m(\l) & = \begin{cases} 2\zeta(m)\quad&\text{if } m\text{ is even}\\ 0& \text{otherwise.}\end{cases}
\end{align}

\par\noindent Free infinite divisibility is characterised by an analytic property of the R-transform. An analytic function $f\colon\C^+\map\C^+$ is called a \textit{Pick function}. For $a,b\in\R$ with $a<b$ we denote by $\p(a,b)$ the set of Pick functions $f$ which have an analytic continuation $g\colon\C\setminus\R\cup (a,b)\map\C$ such that $g(\overline{z})=\overline{g(z)}$. The following result is Theorem~3.3.6 of \textsc{Hiai--Petz}\cite{HiaiPetz}:

\begin{thm}
\label{Thm:AnalyticFID}
A compactly supported probability measure $\mu$ is \FID\ if and only if its R-transform extends to a Pick function in $\p(-\epsilon,\epsilon)$ for some $\epsilon>0$.
\end{thm}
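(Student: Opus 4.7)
The plan is to derive this from the free analogue of the L\'evy--Khintchine theorem due to \textsc{Bercovici}--\textsc{Voiculescu}, which characterises \FID\ measures through a Nevanlinna-type integral representation of the R-transform extended analytically beyond its disk of convergence. The link with the Pick condition is that a function analytic on a neighbourhood of zero lies in $\p(-\epsilon,\epsilon)$ precisely when it extends to an analytic self-map of $\C^+$ that is real and analytic across $(-\epsilon,\epsilon)$; the Nevanlinna class of the free L\'evy--Khintchine theorem supplies exactly such extensions.

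For the forward implication, suppose $\mu$ is \FID. For every $n\in\N$ there is a compactly supported probability measure $\mu_n$ with $\mu=\mu_n^{\boxplus n}$, so on a neighbourhood of zero $R_{\mu_n}=\tfrac{1}{n}R_\mu$. These $\mu_n$ embed in a free convolution semigroup $(\mu_t)_{t\geq 0}$ with $R_{\mu_t}=tR_\mu$ near zero. Each $\mu_t$ is a bona fide probability measure, so the general analytic theory of Cauchy and R-transforms grants $R_{\mu_t}$ an analytic Pick extension beyond the original disk, and uniform control along the family as $t\to 1$ transfers this property to $R_\mu$ itself. Compact support of $\mu$ guarantees the real-analytic symmetry through $(-\epsilon,\epsilon)$.

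For the converse, suppose $R_\mu$ extends to some $f\in\p(-\epsilon,\epsilon)$. By the classical Nevanlinna representation of Pick functions one may write
\[
	f(z) = \alpha + \beta z + \int_{\R} \frac{1+tz}{t-z}\, d\sigma(t),
\]
with $\alpha\in\R$, $\beta\geq 0$ and $\sigma$ a finite positive Borel measure on $\R$. For every $s>0$ the scaled function $sf$ is again a Pick function of exactly the same form, and the \textsc{Bercovici}--\textsc{Voiculescu} theorem asserts that such a function arises as the R-transform of a compactly supported probability measure $\mu_s$. Applied to $s=1/n$ this yields $\mu_{1/n}$ with $R_{\mu_{1/n}}=\tfrac{1}{n}R_\mu$, and additivity of the R-transform on free sums (Remark~\ref{Rmk:PropRTf}(\ref{En:Radd})) gives $\mu_{1/n}^{\boxplus n}=\mu$, showing $\mu$ is \FID.

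The main obstacle is the realisation step in the converse: extending a convergent cumulant power series to a Pick function is a pure analytic-continuation problem, but recovering an honest probability measure from such an abstract extension is a non-trivial theorem and constitutes the technical core of the \textsc{Bercovici}--\textsc{Voiculescu} correspondence; without it one cannot close the loop from analytic extension back to an actual $\boxplus$-$n$-th root of $\mu$.
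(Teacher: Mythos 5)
First, note that the paper does not prove this statement at all: it is quoted verbatim as Theorem~3.3.6 of \textsc{Hiai--Petz} \cite{HiaiPetz}, so there is no in-paper argument to compare yours against. Judged on its own, your outline follows the standard Bercovici--Voiculescu/Nevanlinna route, but the forward direction contains a genuine circularity. You write that each $\mu_t$ in the semigroup ``is a bona fide probability measure, so the general analytic theory of Cauchy and R-transforms grants $R_{\mu_t}$ an analytic Pick extension beyond the original disk.'' That is false: being a compactly supported probability measure gives the \emph{Cauchy} transform a global Pick-type property, but not the R-transform. For the symmetric Bernoulli law $\tfrac12(\delta_{-1}+\delta_1)$ one has $R(z)=\bigl(\sqrt{1+4z^2}-1\bigr)/(2z)$, with branch points at $z=\pm i/2$, so it admits no analytic extension to $\C^+$ at all --- and indeed this measure is not \FID. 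The existence of a Pick extension of the R-transform is precisely the property the theorem characterises, so it cannot be invoked for the individual $\mu_t$; consequently the phrase ``uniform control along the family transfers this property to $R_\mu$'' has nothing to transfer. The correct mechanism is different: one extracts the extension as a locally uniform limit on all of $\C^+$ of \emph{globally defined} Pick-type functions built from the convolution roots, e.g.\ $n\bigl(z-F_{\mu_n}(z)\bigr)$ with $F=1/G$, using $\Im F_\nu(z)\geq\Im z$ and a normal-families argument; or, combinatorially, one shows the shifted cumulant matrix $\bigl(k_{m+n+2}(\mu)\bigr)_{m,n}$ is positive semidefinite by comparing $n\,k_j(\mu_n)$ with $n\,m_j(\mu_n)$ to leading order in $1/n$ and solving the resulting Hamburger moment problem.

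The converse half is structurally sound (scaling the Nevanlinna data by $1/n$ and using additivity of the R-transform, Remark~\ref{Rmk:PropRTf}), but as you yourself concede, it rests entirely on the realisation theorem that every function of that Nevanlinna form, analytic and real on $(-\epsilon,\epsilon)$, \emph{is} the R-transform of a compactly supported probability measure. Since that realisation statement is essentially the free L\'evy--Khintchine theorem, which in \cite{HiaiPetz} is proved together with and is equivalent to the statement at hand, citing it here does not constitute an independent proof. To close the argument you would need either to prove the realisation step (construct $\mu_s$ from the data $(\alpha,\beta,\sigma)$, e.g.\ via a free Poisson/semicircular building-block construction or by inverting $K=R+1/z$ and checking Stieltjes inversion yields a positive measure), or to supply the positive-definiteness argument in the forward direction and deduce both implications from the cumulant characterisation.
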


\noindent It is easy to see that the common R-transform of the $c_n$ extends to a Pick function in $\p(-1,1)$. Therefore each $c_n$ is \FID.

\begin{cor}
	The distribution of $\l$ is \FID.
\end{cor}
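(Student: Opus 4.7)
My plan mirrors the argument at the end of Section~\ref{Sec:SigFBB} for $Z(2\pi)$: I would deduce \FID-ness of $\mu_\l$ from that of the summands $c_n/n$, using stability of the \FID\ class under scaling, free convolution, and weak limits. Each $c_n$ is \FID\ by the discussion immediately preceding the corollary, since $R_{c_n}(z)=2z/(1-z^2)$ extends to a Pick function in $\p(-1,1)$. Scaling preserves FID---if $\mu=\mu_k^{\boxplus k}$ then the pushforward under $x\mapsto x/n$ satisfies $D_{1/n}\mu=(D_{1/n}\mu_k)^{\boxplus k}$---so $c_n/n$ is \FID.

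Next I would check that $\{c_n/n : n\in\N\}$ is a free family: $c_n$ lies in the von Neumann subalgebra $\A_n$ generated by $\{\xi_n,\eta_n\}$, and the $\A_n$ are mutually free, being generated by disjoint pieces of the free family $\{\xi_n,\eta_n : n\in\N\}$. Since the \FID\ class is closed under free convolution---if $\mu,\nu$ admit $k$-th free roots $\mu_k,\nu_k$, then $\mu_k\boxplus\nu_k$ is a $k$-th root of $\mu\boxplus\nu$---every partial sum $\l_N:=\sum_{n=1}^N c_n/n$ is \FID. The $c_n$ are centred and free with $\phi(c_n^2)=k_2(c_n)=2$, so $\phi\bigl((\l-\l_N)^2\bigr)= 2\sum_{n>N}1/n^2\to 0$, giving $\l_N\to\l$ in $L^2(\phi)$. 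Since $\l$ is bounded, this upgrades to weak convergence of the corresponding distributions, and then closure of the \FID\ class under weak limits (a standard fact from Bercovici--Voiculescu theory, cf.\ \cite{HiaiPetz}) yields that $\mu_\l$ is \FID.

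The point deserving most care is the final convergence step: because $\|c_n\|$ is a fixed positive constant, $\sum c_n/n$ is not operator-norm convergent, so the approximation must be carried out at the level of $L^2(\phi)$ or moments. An alternative route that bypasses the approximation entirely would be to verify the analytic criterion of Theorem~\ref{Thm:AnalyticFID} directly via the partial-fraction expansion $R_\l(z)=\sum_{n\geq 1}\bigl[(n-z)^{-1}-(n+z)^{-1}\bigr]$, whose terms are each manifestly Pick functions on $\C^+$ (a short imaginary-part computation) and which is analytic on a neighbourhood of $(-1,1)$.
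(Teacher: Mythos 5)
Your main argument is the same as the paper's: each $c_n$ is \FID\ because $R_{c_n}(z)=2z/(1-z^2)$ extends to a Pick function in $\p(-1,1)$, and free infinite divisibility then passes to $\l=\sum_n c_n/n$ by closure under dilation, free convolution of the mutually free summands, and weak limits --- you merely spell out the details the paper leaves implicit. Your proposed alternative, verifying Theorem~\ref{Thm:AnalyticFID} directly on the partial-fraction expansion $R_{\l}(z)=\sum_{n\geq 1}\bigl[(n-z)^{-1}-(n+z)^{-1}\bigr]=\sum_{n\geq 1}2z/(n^2-z^2)$ with each term manifestly Pick, is also correct and has the merit of bypassing the $L^2$-approximation step entirely.
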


\par\noindent As in Section~\ref{Sec:SqNorm} we can use free infinite divisibility together with the analytic properties of the R-transform and the  formula for the maximum of the support from \cite{LDP_NCP} to describe further the distribution in question. 

The variational formula of Section~\ref{Sec:SqNorm} (Theorem~\ref{Thm:EdgePositive}) assumed that all free cumulants are positive, which is not the case for $\l$ (which is symmetric and therefore has vanishing odd free cumulants). However non-negativity of all free cumulants is actually enough \cite[Theorem 5.9]{LDP_NCP}:

\begin{thm}
	\label{Thm:EdgeNonnegative}
	Let $a\in\A$ be a self-adjoint non-commutative random variable with distribution $\mu$ and free cumulants $k_m\geq 0$ for all $m$. Then the right edge $\rho_\mu$ of the support of $\mu$ is given by
	\begin{align}
		\label{Eq:EdgeNonnegative}
		\log\left(\rho_\mu \right) & = \sup\left\{\inv{m_1(p)}\sum_{n\in L} p_n\log\left(\frac{k_n}{p_n}\right) - \frac{\Theta(m_1(p))}{m_1(p)} \colon p\in\m_1^1(L)\right\}
	\end{align}	
	where $\m_1^1(L)$ denotes the set of $p\in\m_1^1(\N)$ such that $p(L^c)=0$ and $\Theta$ was defined in Theorem~\ref{Thm:EdgePositive}.
\end{thm}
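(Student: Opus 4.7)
The plan is to adapt the large-deviations argument of \cite{LDP_NCP} that establishes Theorem~\ref{Thm:EdgePositive}, this time allowing $k_m=0$ for some indices. Let $L=\{m\in\N: k_m>0\}$. Since every summand in the moment-cumulant formula
\begin{align*}
\phi(a^n) &= \sum_{\pi\in\NC(n)}\prod_{V\in\pi} k_{|V|}
\end{align*}
is non-negative and vanishes unless every block of $\pi$ has size in $L$, the sum effectively runs over $\NC(n,L)$, the non-crossing partitions of $\ul n$ with all block sizes in $L$. I would then parametrise each such $\pi$ by its block-size profile: if $\pi$ has $N=N(\pi)$ blocks with $n_j$ of them of size $j\in L$, then $p_j:=n_j/N$ defines a probability measure on $L$ with mean $m_1(p)=n/N$.

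The heart of the argument is to extract the exponential rate of growth of the partition sum. Kreweras's enumeration gives $|\NC(n,L,p)|=\frac{n!}{(n-N+1)!\,\prod_j n_j!}$, and Stirling's approximation shows that, with $N/n\to 1/m$ and $p$ fixed, $\tfrac{1}{n}\log|\NC(n,L,p)|$ converges to an explicit expression combining the Shannon entropy $H(p)$ with a combinatorial term in $m$ built from $\Theta$. Together with $\prod_{V\in\pi} k_{|V|}=\exp\bigl(N\sum_{j\in L} p_j\log k_j\bigr)$, a Varadhan-type application to the (discrete) sum over block-size profiles yields
\begin{align*}
\lim_{n\to\infty}\tfrac{1}{n}\log\phi(a^n) &= \sup_{p\in\m_1^1(L)}\left\{\tfrac{1}{m_1(p)}\sum_{n\in L} p_n\log\tfrac{k_n}{p_n} - \tfrac{\Theta(m_1(p))}{m_1(p)}\right\},
\end{align*}
where the sign of the $\Theta$-term (flipped relative to Theorem~\ref{Thm:EdgePositive}) is traced directly to the Stirling expansion of the Kreweras factor under the constraint that $p$ be supported in $L\subsetneq\N$.

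The final step is to identify the limit with $\log\rho_\mu$. Since $\phi(a^n)\geq 0$ by non-negativity of the cumulants, no cancellation occurs between summands, so $\rho_\mu=\limsup_n\phi(a^n)^{1/n}$; for a symmetric measure (as for the L\'evy area, where all odd cumulants vanish) this must be read along the even subsequence $n=2k$, for which $\rho_\mu=\|a\|_\infty$. The principal obstacle is the rigorous matching of Varadhan upper and lower bounds on $\m_1^1(L)$: one must prove tightness to guarantee that the supremum is attained on measures of finite mean, exhibit a near-optimal sequence of block profiles $p^{(n)}$ to match an upper bound obtained by union-over-types with a single-type lower bound, and justify the passage to the limit through the infinite sum defining the entropy term. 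A secondary subtlety is the careful bookkeeping that produces $-\Theta(m)/m$ in place of the $+\Theta(m)/m$ of the positive-cumulant case, which I would verify by writing the Kreweras formula in two ways — either as counting partitions with prescribed profile in $\NC(n)$ or as counting within $\NC(n,L)$ — and tracking the resulting combinatorial prefactor through the Stirling expansion.
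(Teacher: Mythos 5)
You should first note that the paper does not actually prove this statement: it is quoted verbatim as Theorem~5.9 of \cite{LDP_NCP} (listed as ``in preparation''), so there is no internal proof to compare against and your proposal has to be judged on its own merits. The overall strategy you describe --- write $\phi(a^n)=\sum_{\pi\in\NC(n)}\prod_{V\in\pi}k_{|V|}$, observe that non-negativity kills all cancellation and restricts the sum to partitions with block sizes in $L$, enumerate by block-size profile via Kreweras's formula $\frac{n!}{(n-N+1)!\prod_j n_j!}$, and extract the exponential rate by Stirling plus a Varadhan-type optimisation over profiles --- is surely the intended route, and it is the right one. But as written the proposal is a strategy outline, not a proof: the steps you yourself flag (tightness on $\m_1^1(L)$, attainment, matching upper and lower bounds over the profile space, the interchange of limits in the entropy sum) are precisely where the content of the theorem lives, and none of them is carried out. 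There is also an unaddressed issue in the final identification: non-negativity of the cumulants gives $\phi(a^n)\ge 0$, but $\lim\sup_n\phi(a^n)^{1/n}$ equals $\max\{|\alpha|,\beta\}$ for $\spt\mu=[\alpha,\beta]$, and identifying this with the \emph{right} edge $\rho_\mu$ requires an argument beyond the symmetric case you mention.

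More seriously, the one place where you claim to have done a concrete computation --- the ``careful bookkeeping that produces $-\Theta(m)/m$ in place of $+\Theta(m)/m$'' --- does not hold up. The Kreweras count of non-crossing partitions with a prescribed block-size profile is the same formula whether or not the block sizes are constrained to lie in $L$; restricting to $L$ only restricts \emph{which} profiles contribute, not the count per profile, so there is no mechanism by which the support restriction could flip the sign of the combinatorial term. Indeed, carrying out the Stirling expansion with $n_j=p_jN$ and $N/n=1/m_1(p)$ gives
\begin{align*}
\inv{n}\log\frac{n!}{(n-N+1)!\prod_j n_j!}\ \longrightarrow\ \frac{H(p)}{m_1(p)}+\frac{m_1(p)\log m_1(p)-(m_1(p)-1)\log(m_1(p)-1)}{m_1(p)}
= \frac{H(p)+\Theta(m_1(p))}{m_1(p)},
\end{align*}
i.e.\ the \emph{plus} sign of Theorem~\ref{Thm:EdgePositive}, as it must, since the positive-cumulant case is the special case $L=\N$ and the two formulas have to agree there. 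A one-line sanity check confirms this: for the standard semicircle law $k_2=1$, $k_m=0$ otherwise, $L=\{2\}$, the only admissible $p$ is $\delta_2$, the entropy term vanishes, and $\Theta(2)/2=\log 2=\log\rho_{\sigma_2}$, whereas the minus sign would give $\rho=1/2$. So a correct execution of your argument proves the formula with $+\Theta(m_1(p))/m_1(p)$, not the displayed one; the minus sign in the statement appears to be a typo inherited from the source, and your attempt to derive it is the point at which the proof, as proposed, would fail.
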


\par\noindent The inverse of the Cauchy transform of $\l$ is given by
\begin{align*}
	K_{\l} & = R_{\l}+\inv{z} = \frac{2}{z} - \pi\cot(\pi z).
\end{align*}
We can check, by simple if lengthy computations similar to those in Section~\ref{Sec:FIDBP} that for every $t\in (\pi,2\pi)$ there exists unique $r(t)>0$ such that $\Im \left[ K_\l \left(r(t) e^{it}\right)\right]=0$ and that
\begin{align}
	\left. \frac{\partial}{\partial z} \Im \left[ K_\l(z) \right] \right\vert_{z=r(t) e^{it}}  &\ne 0 \quad \quad\forall t\in (\pi,2\pi).
\end{align}
We obtain the following characterisation of the distribution of \l:

\begin{prop}
	\label{Thm:DistFLA}
	The non-commutative random variable $\l$ is distributed according to $\mu_\l(\d t) = \Phi_\l( t)\one_{[-\rho_\l,\rho_\l]}\, \d t$ where $\Phi_\l(x) = -\inv{\pi}r(t_x)\sin(\tau_x)$ and $\tau_x$ is the unique solution on $(\pi,2\pi)$ to
\begin{align}
	\label{Eq:TauXLA}
	\frac{2}{r(\tau_x)\,e^{i\tau_x}} & - \pi \cot\left(\pi r(\tau_x)\,e^{i\tau_x}\right)=x.
\intertext{for every $x\in (-\rho_\l,\rho_\l)$. The number $\rho_\l$ is given by}
		\label{Eq:RhoLA}
		\rho_\l & = \frac{m_*\pi}{\sqrt{m_*^2-2}}
		\intertext{where $m_*$ is the unique solution on $(\sqrt{2},\infty)$ of}
		\label{Eq:MLA}
		m-2 & = \sqrt{m^2-2}\, \cot\left(\frac{\sqrt{m^2-2}}{m-1}\right).
	\end{align}
\end{prop}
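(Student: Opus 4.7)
The plan is to mirror the three-step strategy used for the square norm in Section~\ref{Sec:SqNorm}, exploiting the free infinite divisibility of $\l$ just established.

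First I would invoke Proposition~5.12 of \textsc{Bercovici--Voiculescu}~\cite{BercoviciVoiculescu93} exactly as in Lemma~\ref{Thm:ImageGGamma}: free infinite divisibility implies that $G_\l$ is an analytic injection from $\C^+$ onto the connected component $\Omega\subset\C^-$ of $\{z\in\C^-\colon\Im K_\l(z)>0\}$ meeting $iy$ for small $y>0$. The text preceding the statement verifies that for each $t\in(\pi,2\pi)$ there is a unique $r(t)>0$ with $\Im K_\l(r(t)e^{it})=0$ and that the tangential derivative is non-zero, so the boundary of $\Omega$ in $\C^-$ is the simple continuous curve $\{r(t)e^{it}\colon t\in(\pi,2\pi)\}$, whence $\Omega$ is simply connected. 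Carath\'eodory's theorem then extends $G_\l$ to a homeomorphism $\wh G_\l$ from $\C^+\cup\R\cup\{\infty\}$ onto $\overline\Omega$; since $\Omega$ is bounded, $\wh G_\l$ is finite on $\R$, so by Stieltjes inversion $\spt\mu_\l$ is a compact interval and its density is
\begin{align*}
	\Phi_\l(x) &= -\inv{\pi}\Im\wh G_\l(x) = -\inv{\pi}\,r(\tau_x)\sin(\tau_x),
\end{align*}
with $\tau_x$ the unique value in $(\pi,2\pi)$ satisfying $K_\l(r(\tau_x)e^{i\tau_x})=x$, i.e.~(\ref{Eq:TauXLA}). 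Because $R_\l$ is an odd function, $\mu_\l$ is symmetric about $0$, forcing the support to be $[-\rho_\l,\rho_\l]$.

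For the endpoint I would apply Theorem~\ref{Thm:EdgeNonnegative} with $L=2\N$ and $k_{2j}=2\zeta(2j)$. The supremum is amenable to Lagrange multipliers as in the square-norm case: fixing $m_1(p)=m$, the maximisers have the form $p_{2j}^\star=Ck_{2j}e^{-2j\beta}$, where the normalisation constraint, after writing $2\zeta(2j)=2\sum_{\ell\ge1}\ell^{-2j}$ and interchanging sums, becomes
\begin{align*}
	2C\sum_{\ell\ge1}\frac{e^{-2\beta}}{\ell^2-e^{-2\beta}} &= 1,
\end{align*}
which by the partial fraction identity $\pi z\cot(\pi z)=1-2\sum_{\ell\ge1}\frac{z^2}{\ell^2-z^2}$ reduces to an elementary equation in $e^{-\beta}$; the mean constraint $\sum 2j\,p_{2j}^\star=m$ admits an analogous reduction. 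Maximising in $m$ eliminates $\beta$ and yields~(\ref{Eq:MLA}), and evaluating $\log\rho_\l$ at $m=m_*$ produces~(\ref{Eq:RhoLA}).

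The main obstacle I expect is in this last step: although the scheme is identical to the square-norm analysis, the algebra reducing the two Lagrange constraints to the single implicit equation~(\ref{Eq:MLA}) is delicate, as is the sign analysis required to certify that $m_*$ is the unique maximiser on $(\sqrt{2},\infty)$. By contrast, the geometric and Stieltjes-inversion steps are routine modifications of those carried out for $\Gamma$.
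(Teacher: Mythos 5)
Your proposal follows the paper's proof essentially verbatim: the density formula is obtained by transporting the Bercovici--Voiculescu/Carath\'eodory/Stieltjes-inversion argument of Section~\ref{Sec:FIDBP} to $K_\l$ (using the boundary curve $r(t)e^{it}$ verified just before the statement, plus symmetry of $\mu_\l$), and the endpoint comes from Theorem~\ref{Thm:EdgeNonnegative} with $L=2\N$ and $k_{2j}=2\zeta(2j)$, solved by Lagrange multipliers via the cotangent partial-fraction expansion. The paper's own proof is in fact terser than your write-up, simply citing the ``suitably modified'' arguments of Section~\ref{Sec:FIDBP} and asserting the unique maximiser, so your sketch fills in exactly the steps the paper leaves implicit.
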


\begin{figure}[H]
	\begin{center}
	\includegraphics[scale = .3]{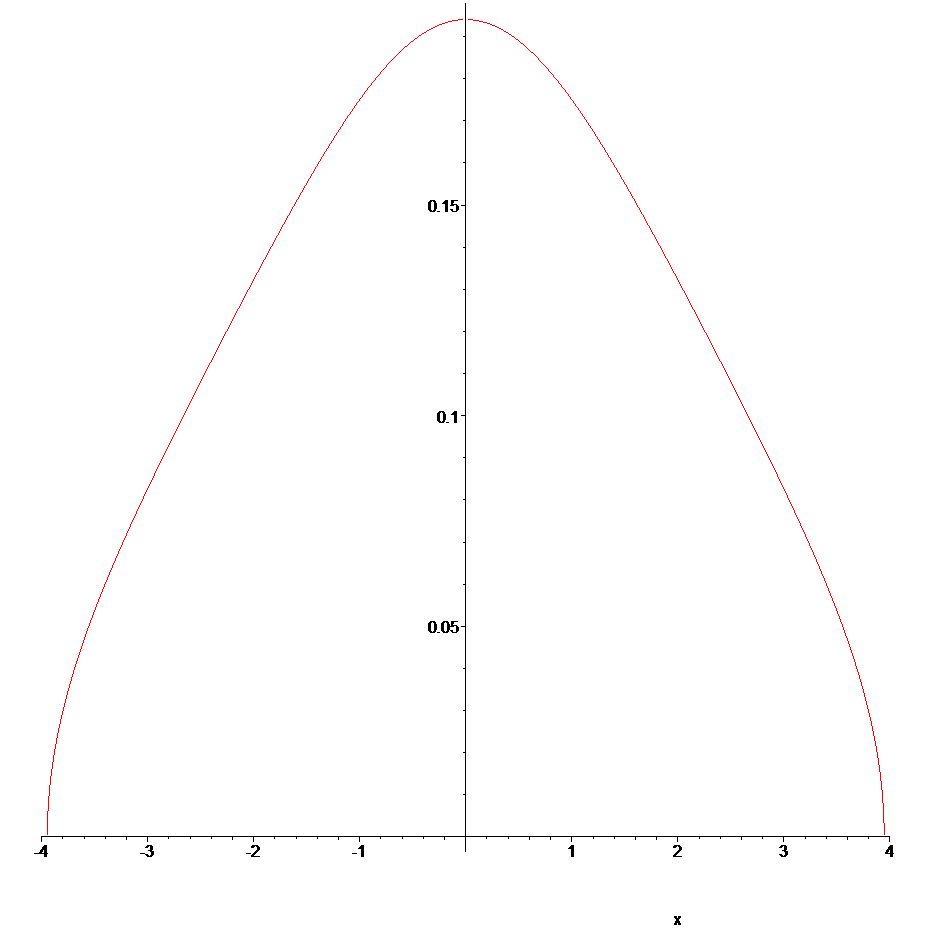}
	\hspace{5cm}
	\parbox{11cm}{\caption*{\footnotesize{Density of the free L\'evy area}}}
\end{center}

\end{figure}

\begin{proof}[Proof of Proposition~\ref{Thm:DistFLA}]
The law $\mu_\l$ of \l\ is symmetric about 0. Together with the analytic arguments of Section~\ref{Sec:FIDBP}, suitably modified, this implies the existence of $\rho_\l>0$ such that the density $\Phi_\l$ of $\mu_\l$ is smooth, positive on $(-\rho_\l,\rho_\l)$ and zero everywhere else. The function $\Phi_\l$ is given by $\Phi_\l(x) = -\inv{\pi}r(\tau_x)\sin(\tau_x)$ where $\tau_x$ is characterised by (\ref{Eq:TauXLA}).

For the remainder of the statement we apply Theorem~\ref{Thm:EdgeNonnegative}. Only the free cumulants of even order are nonzero, so that $L=\{2n\colon n\in\N\}$. The supremum on the right-hand side of (\ref{Eq:EdgeNonnegative}) is attained by a unique maximiser which gives rise to equations (\ref{Eq:RhoLA}) and (\ref{Eq:MLA}). This completes the proof of the proposition.
\end{proof}

\ \\

\ \\\ \\ \noindent \textsc{Mathematics Institute, University of Warwick, Coventry CV4 7AL, UK}\\\ \\ \textit{Email address} \texttt{j.ortmann@warwick.ac.uk}

\end{document}